\newtheorem{theorem}{Theorem} 
\newtheorem{alphtheorem}{Theorem}
\newtheorem{alphlemma}{Lemma}
\newtheorem{corollary}{Corollary}
\newtheorem{proposition}{Proposition}
\newtheorem{lemma}{Lemma}
\theoremstyle{definition}
\newtheorem{question}{Question}
\theoremstyle{remark}
\def\Z{\mathbb{Z}}
\def\F{\mathcal{F}}
\def\zero{\boldsymbol{0}}
\def\ds{\displaystyle}
\def\KG{\operatorname{KG}}
\def\cd{\operatorname{cd}}
\def\alt{\operatorname{alt}}
\def\sd{\operatorname{sd}}
\title{Colorful Subhypergraphs in Uniform Hypergraphs}
\author{Meysam Alishahi}
\address{M. Alishahi, 
School of Mathematical Sciences,
Shahrood University of Technology, Shahrood, Iran}
\email{meysam\_alishahi@shahroodut.ac.ir}
\begin{document}
\maketitle

\begin{abstract}
There are several topological results ensuring the existence of a large complete bipartite subgraph in any properly colored graph satisfying some special topological regularity conditions. In view of $\mathbb{Z}_p$-Tucker lemma, 
Alishahi and Hajiabolhassan [{\it On the chromatic number of general Kneser hypergraphs, Journal of Combinatorial Theory, Series B, 2015}] introduced a lower bound for the chromatic number of 
Kneser hypergraphs ${\rm KG}^r({\mathcal H})$. 
Next, Meunier [{\it Colorful subhypergraphs in
Kneser hypergraphs, The Electronic Journal of Combinatorics, 2014}] improved their result by proving that any properly colored general Kneser hypergraph  ${\rm KG}^r({\mathcal H})$ contains a large colorful $r$-partite subhypergraph provided that $r$ is prime. 
In this paper, we give some new generalizations of $\mathbb{Z}_p$-Tucker lemma.
Hence, improving Meunier's result in some aspects.  Some new lower bounds for the chromatic number and local chromatic number of uniform hypergraphs are presented as well. \\

\noindent{\it Keyword:} chromatic number of hypergraphs, $\mathbb{Z}_p$-Tucker-Ky~Fan lemma, colorful complete hypergraph, $\mathbb{Z}_p$-box-complex, 
$\mathbb{Z}_p$-hom-complex
\end{abstract}

\section{\bf Introduction}
\subsection{{\bf Background and Motivations}}
In 1955, Kneser~\cite{MR0068536} posed a conjecture about the chromatic number 
of Kneser graphs. In 1978, Lov{\'a}sz~\cite{MR514625} proved this conjecture by 
using algebraic topology.
The Lov{\'a}sz's proof marked the beginning of the history of topological combinatorics.
Nowadays,  it is an active stream of research to study the coloring properties of graphs  
by using algebraic topology. There are several lower bounds for the chromatic number 
of graphs  related to the indices of some topological spaces defined based on the 
structure of graphs. 
However, for hypergraphs, there are a few such lower bounds, see~\cite{2013arXiv1302.5394A,MR953021,Iriye20131333,MR1081939,MR2279672}.

A {\it hypergraph} ${\mathcal H}$ is a pair $(V({\mathcal H}),E({\mathcal H}))$, where $V({\mathcal H})$ is a finite set, called the vertex set 
of ${\mathcal H}$, and $E({\mathcal H})$ is a family of nonempty subsets of $V({\mathcal H})$, called the edge set of ${\mathcal H}$. 
Throughout the paper, by a nonempty hypergraph, we mean a hypergraph 
with at least one edge. 
If any edge $e\in E({\mathcal H})$ has the cardinality $r$, then the hypergraph 
${\mathcal H}$ is called  {\it $r$-uniform.} For a set $U\subseteq V({\mathcal H})$, the {\it induced subhypergraph on $U$,} denoted ${\mathcal H}[U]$, is a hypergraph with the vertex set $U$ and the edge set 
$\{e\in E({\mathcal H}):\; e\subseteq U\}$.
Throughout the paper, by a {\it graph,} we mean a $2$-uniform hypergraph. 
Let $r\geq 2$ be a positive integer and $q\geq r$ be an integer.
An $r$-uniform hypergraph ${\mathcal H}$ is called {\it $q$-partite} with parts $V_1,\ldots,V_q$ if 
\begin{itemize}
\item $V({\mathcal H})=\displaystyle\bigcup_{i=1}^q V_i$ and
\item each edge of ${\mathcal H}$ intersects each part $V_i$ in at most one vertex.
\end{itemize}
If  ${\mathcal H}$ contains all possible edges, then we call it a
{\it complete $r$-uniform $q$-partite hypergraph.}
Also, we say the hypergraph  ${\mathcal H}$ is {\it balanced}  if the values of $|V_j|$ for 
$j =1,\ldots,q$ differ by at most one, i.e., $|V_i|-|V_j|\leq 1$ for each $i,j\in[q]$.

Let ${\mathcal H}$ be an $r$-uniform hypergraph and $U_1,\ldots, U_q$ be $q$ pairwise disjoint 
subsets of $V({\mathcal H})$.
The hypergraph ${\mathcal H}[U_1,\ldots, U_q]$ is a subhypergraph of ${\mathcal H}$ with the vertex set $\displaystyle\bigcup_{i=1}^q U_i$ and the edge set $$E({\mathcal H}[U_1,\ldots, U_q])=\left\{e\in E({\mathcal H}):\; e\subseteq \displaystyle\bigcup_{i=1}^q U_i\mbox{ and } |e\cap U_i|\leq 1\mbox{ for each } i\in[q]\right\}.$$
Note that ${\mathcal H}[U_1,\ldots, U_q]$ is an $r$-uniform $q$-partite hypergraph with parts $U_1,\ldots,U_q$.
By the symbol ${[n]\choose r}$, we mean the family of all $r$-subsets of the set $[n]$. 
The hypergraph $K^r_n=\left([n],{[n]\choose r}\right)$ is celled  the complete $r$-uniform hypergraph with $n$ vertices. For $r=2$, we would rather use $K_n$ instead of $K_n^2$.
The largest possible integer $n$ such that ${\mathcal H}$ 
contains $K^r_n$ as a  subhypergraph 
is called the {\it clique number of ${\mathcal H}$}, denoted $\omega({\mathcal H})$.

A {\it proper $t$-coloring} of a hypergraph ${\mathcal H}$ is a map $c:V({\mathcal H})\longrightarrow [t]$ such that there is no monochromatic edge. The minimum possible 
such a $t$ is called {\it the chromatic number of ${\mathcal H}$}, denoted $\chi({\mathcal H})$. If there is no such a $t$, we define the chromatic number to be infinite.
Let $c$ be a proper coloring  of ${\mathcal H}$ and  $U_1,\ldots, U_q$ be 
$q$ pairwise disjoint subsets  of $V({\mathcal H})$.
The hypergraph ${\mathcal H}[U_1,\ldots, U_q]$ is said to be {\it colorful } if  
for each $j\in[q]$, the vertices of $U_j$ get pairwise distinct colors.
For a properly colored graph $G$, a subgraph is called {\it multicolored} if its vertices get pairwise distinct colors.

For a hypergraph ${\mathcal H}$, {\it the Kneser hypergraph} ${\rm KG}^r({\mathcal H})$ is an $r$-uniform hypergraph with the vertex set $E({\mathcal H})$ and whose edges are formed by $r$ pairwise vertex-disjoint edges of ${\mathcal H}$, i.e.,
$$E({\rm KG}^r({\mathcal H}))=\left\{ \{e_1,\ldots,e_r\}:\; e_i\cap e_j=\varnothing\mbox{ for each } i\neq j\in[r] \right\}.$$
For any graph $G$,
it is known that there are several hypergraphs ${\mathcal H}$ such that ${\rm KG}^2({\mathcal H})$ and $G$ are isomorphic. 
The Kneser hypergraph ${\rm KG}^r\left(K_n^k\right)$ is called the ``usual" 
Kneser hypergraph which is denoted by ${\rm KG}^r(n,k)$. 
Coloring properties of Kneser hypergraphs have been studied extensively 
in the literature. Lov\'asz~\cite{MR514625} (for $r=2$) and Alon, Frankl and Lov\'asz~\cite{MR857448} determined the chromatic number of ${\rm KG}^r(n,k)$. For an integer $r\geq 2$, they proved that
$$\chi\left({\rm KG}^r(n,k)\right)= \left\lceil{n-r(k-1)\over r-1}\right\rceil.$$
For a hypergraph ${\mathcal H}$, the $r$-colorability defect of ${\mathcal H}$, denoted ${\rm cd}_r({\mathcal H})$, is the 
minimum number of vertices which should be removed such that the induced hypergraph on the remaining vertices is $r$-colorable, i.e., 
$${\rm cd}_r({\mathcal H})=\min\left\{|U|:\; {\mathcal H}[V({\mathcal H})\setminus U]\mbox{ is $r$-colorable}\right\}.$$
For a hypergraph ${\mathcal H}$, 
Dol'nikov~\cite{MR953021}~(for $r=2$) and K{\v{r}}{\'{\i}}{\v{z}}~{\rm \cite{MR1081939} proved taht 
$$\chi({\rm KG}^r({\mathcal H}))\geq \left\lceil{{\rm cd}_r({\mathcal H})\over r-1}\right\rceil,$$
which is a generalization of the results by Lov\'asz~\cite{MR514625} and Alon, Frankl and Lov\'asz~\cite{MR857448}.

For a positive integer $r$, let $\mathbb{Z}_r=\{\omega,\omega^2\ldots,\omega^r\}$ 
be a cyclic group of order $r$ with generator $\omega$.
Consider a vector  $X=(x_1,x_2,\ldots,x_n)\in(\mathbb{Z}_r\cup\{0\})^n$.
An alternating 
subsequence of $X$ is a sequence $x_{i_1},x_{i_2},\ldots,x_{i_m}$ 
of nonzero terms of $X$ such that $i_1<\cdots<i_m$ and $x_{i_j}\neq x_{i_{j+1}}$ for each $j\in [m-1]$. 
We denote by ${\rm alt}(x)$ the maximum possible length of an 
alternating subsequence of $X$.
For a vector $X=(x_1,x_2,\ldots,x_n)\in (\mathbb{Z}_r\cup\{0\})^n$ and for an $\epsilon\in\Z_p$, set 
$X^\epsilon=\{i\in[n]:\; x_i=\epsilon\}$.
Note that, by abuse of notation, we can write $X=(X^\epsilon)_{\epsilon\in \mathbb{Z}_r}$.
For two vectors $X,Y\in (\mathbb{Z}_r\cup\{0\})^n$, by $X\subseteq Y$, we mean
$X^\epsilon\subseteq Y^\epsilon$ for each $\epsilon\in\mathbb{Z}_r$.

For a hypergraph ${\mathcal H}$ and a bijection $\sigma:[n]\longrightarrow V({\mathcal H})$, define 
$${\rm alt}_r({\mathcal H},\sigma)=\ds\max\left\{{\rm alt}(X):\; X\in (\mathbb{Z}_r\cup\{0\})^n\mbox{ such that } E({\mathcal H}[\sigma(X^\epsilon)])=\varnothing\mbox{ for each  } \epsilon\in\mathbb{Z}_r \right\}.$$
Also, let 
$${\rm alt}_r({\mathcal H})=\displaystyle\min_{\sigma} {\rm alt}_r({\mathcal H},\sigma),$$
where the minimum is taken over all bijection $\sigma:[n]\longrightarrow V({\mathcal H})$.
One can readily check that for any hypergraph ${\mathcal H}$, $|V({\mathcal H})|-{\rm alt}_r({\mathcal H})\geq {\rm cd}_r({\mathcal H})$ and the inequality is often strict, see~\cite{2013arXiv1302.5394A}. 
Alishahi and Hajiabolhassan~\cite{2013arXiv1302.5394A} improved  Dol'nikov-K{\v{r}}{\'{\i}}{\v{z}} result by proving that 
for any hypergraph ${\mathcal H}$ and for any integer $r\geq 2$, the quantity $\left\lceil{ |V({\mathcal H})|-{\rm alt}_r({\mathcal H})\over r-1}\right\rceil$ is a lower bound for the chromatic number of 
${\rm KG}^r({\mathcal H})$.  
Using this lower bound, the chromatic number of some families of graphs and hypergraphs are computed, see~\cite{2014arXiv1401.0138A,2014arXiv1403.4404A,2014arXiv1407.8035A,2015arXiv150708456A,2013arXiv1302.5394A,HaMe16}. 
There are some other lower bounds for the chromatic number of graphs which are 
better than the former discussed lower bounds.
They are based on some topological  indices of some topological spaces 
connected to the structure of graphs.
In spite of these lower bounds being better, they are~not combinatorial and most of the times they are difficult to compute.

The existence of large colorful bipartite subgraphs in a properly colored graph has 
been extensively studied in the 
literature~\cite{2013arXiv1302.5394A,MR2971704,MR2763055,SiTaZs13,MR2279672,MR2351519}. 
To be more specific, there are several theorems ensuring the existence of a  colorful bipartite subgraph in any properly colored graph such that the bipartite subgraph has a specific number of vertices related to some topological parameters connected to the graph. 
Simonyi and Tardos~\cite{MR2351519} improved  Dol'nikov's lower bound and proved that 
in any proper coloring of a Kneser graph ${\rm KG}^2({\mathcal H})$, there is a multicolored complete bipartite graph 
$K_{\left\lceil{{\rm cd}_2({\mathcal H})\over 2}\right\rceil,\left\lfloor{{\rm cd}_2({\mathcal H})\over 2}\right\rfloor}$ 
such that the ${\rm cd}^2({\mathcal H})$ different colors occur alternating on the 
two parts of the bipartite graph with respect to their natural order. 
By a combinatorial proof, Alishahi and Hajiabolhassan~\cite{2013arXiv1302.5394A}  improved this result. They proved that the the result 
remains true if we replace ${\rm cd}^2({\mathcal H})$ by $n-{\rm alt}_2({\mathcal H})$. Also, 
a stronger result is proved by Simonyi, Tardif, and  Zsb{\'{a}}n~\cite{SiTaZs13}.
\begin{alphtheorem}{\rm (Zig-zag Theorem~\cite{SiTaZs13}).}\label{zigzag}
Let $G$ be a nonempty graph 
which is properly colored with arbitrary number of colors. 
Then $G$ contains a multicolored complete bipartite subgraph $K_{\lceil{t\over2}\rceil,\lfloor{t\over2}\rfloor}$,
where ${\rm Xind}({\rm Hom}(K_2,G))+2= t$. Moreover, colors appear alternating on the two sides of the bipartite subgraph with respect to their natural ordering.
\end{alphtheorem}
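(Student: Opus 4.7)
\medskip
\noindent\textbf{Proof plan.} I would attack Theorem~\ref{zigzag} via the $\Z_2$-version of the Ky~Fan combinatorial theorem applied to a $\Z_2$-space equivalent to $\mathrm{Hom}(K_2,G)$. Let $c:V(G)\to[n]$ be the given proper coloring. It is convenient to replace $\mathrm{Hom}(K_2,G)$ by a simplicial $\Z_2$-model, the box complex $B(G)$, whose simplices are disjoint pairs $(A,B)$ of vertex subsets with every vertex of $A$ joined to every vertex of $B$; the involution swaps $A$ and $B$. Since $\mathrm{Hom}(K_2,G)$ and $B(G)$ are $\Z_2$-homotopy equivalent, the hypothesis reads $\mathrm{Xind}(B(G))=t-2$.

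\medskip
\noindent The first step is to build an equivariant labeling. Assign to a simplex $(A,B)\in B(G)$ the signed color set $\{+c(v):v\in A\}\cup\{-c(v):v\in B\}\subseteq\{\pm 1,\ldots,\pm n\}$. Properness of $c$ forbids $+i$ and $-i$ from occurring simultaneously (that would force an edge between two vertices of the same color inside the biclique $(A,B)$), so this gives a $\Z_2$-equivariant simplicial map $\lambda:B(G)\to\Sigma^{n-1}$, where $\Sigma^{n-1}$ is the simplicial sphere on $\{\pm 1,\ldots,\pm n\}$ whose simplices are the sign-consistent subsets.

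\medskip
\noindent The second step is to feed $\lambda$ into Ky~Fan's theorem (the $\Z_2=\Z_p$ for $p=2$ case of the Ky~Fan analog of the Tucker lemma). Because $\mathrm{Xind}(B(G))\ge t-2$, Ky~Fan guarantees the existence of a chain of simplices in $B(G)$ whose $\lambda$-images form a signed sequence $\epsilon_1 i_1,\epsilon_2 i_2,\ldots,\epsilon_t i_t$ with $i_1<i_2<\cdots<i_t$ and strictly alternating signs $\epsilon_j\in\{+,-\}$. Reading this chain at its top element produces a single simplex $(A,B)\in B(G)$ such that the set of colors appearing on $A\cup B$ is exactly $\{i_1,\ldots,i_t\}$ and, in the natural ordering of these colors, consecutive ones belong to opposite sides of the pair $(A,B)$.

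\medskip
\noindent Finally, one interprets $(A,B)$ back in $G$: by construction every vertex in $A$ is adjacent to every vertex in $B$, so $G$ contains a multicolored complete bipartite subgraph on $|A|+|B|=t$ vertices. The alternation of signs in the Ky~Fan sequence forces $\{|A|,|B|\}=\{\lceil t/2\rceil,\lfloor t/2\rfloor\}$ and gives precisely the alternating arrangement of the $t$ colors on the two sides. The main technical obstacle is the first step, namely packaging $\mathrm{Hom}(K_2,G)$ (a poset/cell complex) as a sufficiently regular free $\Z_2$-simplicial complex on which a combinatorial Ky~Fan statement can be applied and on which $\lambda$ descends to a genuine equivariant simplicial map with the required label-disjointness; once that is in place, the rest is standard bookkeeping.
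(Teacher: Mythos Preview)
The paper does not prove Theorem~\ref{zigzag}; it is quoted from \cite{SiTaZs13}. The paper's own result here is Theorem~\ref{maincolorfulindex}(ii), whose $r=p=2$ case gives the multicolored balanced $K_{\lceil t/2\rceil,\lfloor t/2\rfloor}$ (without the alternation clause). That proof proceeds much as you suggest: send $(U_1,U_2)\in\mathrm{Hom}(K_2,G)$ to its signed color set and argue by contradiction that some image must be large and balanced---but it works entirely inside the poset $\mathrm{Hom}(K_2,G)$.

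Your outline has a genuine gap precisely at the step you yourself call the ``main technical obstacle''. You assert that because $\mathrm{Hom}(K_2,G)$ and $B(G)$ are $\Z_2$-homotopy equivalent, the hypothesis reads $\mathrm{Xind}(B(G))=t-2$. But $\mathrm{Xind}$ is a combinatorial invariant of $\Z_2$-\emph{posets}---the least $n$ admitting an order-preserving $\Z_2$-map to $Q_{n,2}$---not a $\Z_2$-homotopy invariant; and $B(G)$ is a simplicial complex, so $\mathrm{Xind}(B(G))$ is not even defined. What a homotopy equivalence transfers is the topological index $\mathrm{ind}_{\Z_2}$, and one only has $\mathrm{Xind}(P)\geq\mathrm{ind}_{\Z_2}(\Delta P)$ in general. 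Running the simplicial Ky~Fan argument on $B(G)$ therefore yields only $\mathrm{ind}_{\Z_2}(B(G))+1$ alternating signed colors, which can be strictly fewer than the required $t=\mathrm{Xind}(\mathrm{Hom}(K_2,G))+2$. The fix is to drop $B(G)$ altogether and argue at the poset level, as the paper does for Theorem~\ref{maincolorfulindex}(ii) and as Theorem~\ref{altercrossindex} is designed for: your $\lambda$ is already an order-preserving $\Z_2$-map out of $\mathrm{Hom}(K_2,G)$, and composing with a suitable order-preserving $\Z_2$-statistic into some $Q_{s,2}$ (built from $l(-)$ and $s(-)$ in the paper) lets the \emph{definition} of $\mathrm{Xind}$ do the work directly, with no simplicial model or homotopy equivalence needed.
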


The quantity ${\rm Xind}({\rm Hom}(K_2,G))$ is the cross-index of 
hom-complex ${\rm Hom}(K^2,G)$  which will be defined in 
Subsection~\ref{Boxdefin}. We should mention that there are some other weaker similar 
results in terms of some other topological parameters, see~\cite{MR2279672,MR2351519}.
 
Note that prior mentioned results concern the existence of colorful bipartite subgraphs 
in properly colored graphs ($2$-uniform hypergraphs).
In 2014, Meunier~\cite{Meunier14} 
found the first colorful type result for the uniform hypergraphs.
He proved that for any prime number $p$, any properly colored Kneser 
hypergraph $\KG^p({\mathcal H})$ must contain a colorful
balanced complete $p$-uniform $p$-partite subhypergraph with a specific number 
of vertices, see Theorem~\ref{colorfulhyper}.

\subsection{\bf Main Results}
For a given graph $G$, 
there are several complexes defined based on the structure of $G$. 
For instance, the box-complex of $G$, denoted ${\rm B}_0(G)$,  and the hom-complex of $G$, denoted ${\rm Hom}(K_2,G)$, see~\cite{MR1988723,SiTaZs13,MR2279672}.
Also, there are some lower bounds for the chromatic number of graphs 
related to some indices of these complexes~\cite{SiTaZs13,MR2279672}.
In this paper, we naturally generalize the  definitions of  box-complex and hom-complex 
of graphs to uniform hypergraphs. 
Also, the definition of $\mathbb{Z}_p$-cross-index of $\Z_p$-posets will be introduced.
Using these complexes, 
as a first main result of this paper, we generalize Meunier's result~\cite{Meunier14} (Theorem~\ref{colorfulhyper})
to the following theorem.
\begin{theorem}\label{maincolorfulindex}
Let $r\geq 2$ be a positive integer and $p\geq r$ be a prime number.
Assume that ${\mathcal H}$ is an $r$-uniform hypergraph and $c:V({\mathcal H})\longrightarrow[C]$ is a proper coloring of 
${\mathcal H}$ {\rm (}$C$ arbitrary{\rm )}. Then we have the 
following assertions.
\begin{itemize}
\item[{\rm (i)}] There is some colorful balanced complete $r$-uniform $p$-partite 
subhypergraph in  ${\mathcal H}$
with ${\rm ind}_{\mathbb{Z}_p}(B_0({\mathcal H},\mathbb{Z}_p))+1$ vertices.
In particular,
$$\chi({\mathcal H})\geq {{\rm ind}_{\mathbb{Z}_p}(B_0({\mathcal H},\mathbb{Z}_p))+1\over r-1}.$$
\item[{\rm (ii)}] If $p\leq \omega({\mathcal H})$, then there is some colorful balanced complete $r$-uniform $p$-partite subhypergraph in ${\mathcal H}$ with ${\rm Xind}_{\mathbb{Z}_p}({\rm Hom}(K^r_p,{\mathcal H}))+p$ vertices.
In particular,
$$\chi({\mathcal H})\geq {{\rm Xind}_{\mathbb{Z}_p}({\rm Hom}(K^r_p,{\mathcal H}))+p\over r-1}.$$
\end{itemize}
\end{theorem}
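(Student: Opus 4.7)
The plan is to derive both parts from the generalized $\mathbb{Z}_p$-Tucker-Ky Fan lemma announced in the introduction, applied to a $\mathbb{Z}_p$-equivariant labeling built from the proper coloring $c$. I describe the argument for part~(i); part~(ii) is parallel, with $B_0(\mathcal{H},\mathbb{Z}_p)$ replaced by $\operatorname{Hom}(K^r_p,\mathcal{H})$ and the $\mathbb{Z}_p$-index replaced by the cross-index, the additional hypothesis $p\leq\omega(\mathcal{H})$ being used to guarantee $\operatorname{Hom}(K^r_p,\mathcal{H})\neq\varnothing$.

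Set $d:=\operatorname{ind}_{\mathbb{Z}_p}(B_0(\mathcal{H},\mathbb{Z}_p))$ and fix a bijection $\sigma:[n]\to V(\mathcal{H})$. To each nonzero sign vector $X=(X^\epsilon)_{\epsilon\in\mathbb{Z}_p}\in(\mathbb{Z}_p\cup\{0\})^n$ I attach a label $\lambda(X)$ of one of two types. If $E(\mathcal{H}[\sigma(X^\epsilon)])=\varnothing$ for every $\epsilon$, then $X$ corresponds to a face of $B_0(\mathcal{H},\mathbb{Z}_p)$; I compose with a $\mathbb{Z}_p$-equivariant map into the standard $(d{+}1)$-fold join $E_d\mathbb{Z}_p$ (supplied by the definition of $\operatorname{ind}_{\mathbb{Z}_p}$) to read off a \emph{low} label in $[d+1]\times\mathbb{Z}_p$. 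Otherwise, I take the smallest color $t$ met by some $\sigma(X^\epsilon)$, let $\epsilon^\star$ be the first component (in a fixed cyclic order) achieving this minimum, and set $\lambda(X)=(d+1+t,\epsilon^\star)$, a \emph{high} label. This construction is manifestly $\mathbb{Z}_p$-equivariant.

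Assume, for contradiction, that $\mathcal{H}$ contains no colorful balanced complete $r$-uniform $p$-partite subhypergraph on $d+1$ vertices. Then a chain $X_1\subsetneq\cdots\subsetneq X_{d+1}$ carrying the repeated high-label pattern produced by the $\mathbb{Z}_p$-Tucker-Ky Fan lemma would exhibit $p$ transversal monochromatic-in-each-part color classes forming exactly such a forbidden subhypergraph; hence no such chain can exist. Under this vanishing the generalized $\mathbb{Z}_p$-Tucker-Ky Fan lemma forces the low labels to realize a $\mathbb{Z}_p$-equivariant simplicial map from $E_d\mathbb{Z}_p$ into a free $\mathbb{Z}_p$-complex of index strictly less than $d$, contradicting the choice of $d$. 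Thus the claimed colorful subhypergraph must exist.

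The chromatic inequality is then immediate: in any colorful balanced complete $r$-uniform $p$-partite subhypergraph $\mathcal{H}[U_1,\ldots,U_p]$ on $m$ vertices, a color appearing in $r$ distinct parts would produce a monochromatic edge, so each color meets at most $r-1$ of the $U_j$; summing gives $m=\sum_j|U_j|\leq(r-1)\chi(\mathcal{H})$, and plugging in $m=d+1$ for~(i) and $m=\operatorname{Xind}_{\mathbb{Z}_p}(\operatorname{Hom}(K^r_p,\mathcal{H}))+p$ for~(ii) yields the two bounds. The hardest step is designing the labeling so that equivariance, the precise encoding \emph{``no colorful subhypergraph $\Rightarrow$ no repeated-label pattern''}, and the factoring of the low labels through $B_0(\mathcal{H},\mathbb{Z}_p)$ (resp.\ $\operatorname{Hom}(K^r_p,\mathcal{H})$) all hold simultaneously; reconciling these three compatibilities, especially the simplicial compatibility of the low labels with the ambient complex, is where the delicate combinatorial-topological work lies.
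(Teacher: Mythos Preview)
Your proposal contains a fundamental misidentification that breaks the argument. You write that if $E(\mathcal{H}[\sigma(X^\epsilon)])=\varnothing$ for every $\epsilon$, then ``$X$ corresponds to a face of $B_0(\mathcal{H},\mathbb{Z}_p)$''. This is false: by definition, $\{\omega^1\}\times U_1\cup\cdots\cup\{\omega^p\}\times U_p$ is a simplex of $B_0(\mathcal{H},\mathbb{Z}_p)$ exactly when $\mathcal{H}[U_1,\ldots,U_p]$ is a \emph{complete} $r$-uniform $p$-partite hypergraph, not when each $U_i$ is edge-free. For $r=p=2$ these are essentially opposite conditions (complete bipartite versus both sides independent). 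Consequently there is no map from your ``low'' sign vectors into $B_0(\mathcal{H},\mathbb{Z}_p)$ through which you could pull back a $\mathbb{Z}_p$-map to $E_d\mathbb{Z}_p$, and the whole low/high labeling scheme collapses. More broadly, the $\mathbb{Z}_p$-Tucker--Ky~Fan lemma (Lemma~\ref{Z_pfanlemma}) is a statement about sign vectors on $[n]$ and yields a chain of length $n-\alpha$; it produces bounds of alternation/colorability-defect type, not a bound governed by $\operatorname{ind}_{\mathbb{Z}_p}(B_0(\mathcal{H},\mathbb{Z}_p))$ for an arbitrary $\mathcal{H}$. Your tie-breaking rule ``first component in a fixed cyclic order'' is also not $\mathbb{Z}_p$-equivariant as written.

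The paper proceeds quite differently and avoids sign vectors on $V(\mathcal{H})$ altogether. For part~(i) one works \emph{directly} on $B_0(\mathcal{H},\mathbb{Z}_p)$: the coloring gives the simplicial $\mathbb{Z}_p$-map $\Gamma\colon B_0(\mathcal{H},\mathbb{Z}_p)\to(\sigma^{p-1}_{r-2})^{*C}$, $(\epsilon,v)\mapsto(\epsilon,c(v))$, and Lemma~\ref{genfanlemma} (a Dold-type obstruction applied to the image) yields a balanced $t$-dimensional simplex in the image, whose minimal preimage is the desired colorful subhypergraph. For part~(ii) the argument is on the poset $\operatorname{Hom}(K^r_p,\mathcal{H})$: one sends $(U_1,\ldots,U_p)$ to $\bigcup_i\{\omega^i\}\times c(U_i)$, and if no $\tau$ in the image had $l(\tau)\geq t+p$ one could build an order-preserving $\mathbb{Z}_p$-map into $Q_{t-1,p}$, contradicting $\operatorname{Xind}_{\mathbb{Z}_p}=t$. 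Your final paragraph deriving the chromatic inequality from the colorful subhypergraph is correct and matches the paper.
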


Quantities ${\rm ind}_{\mathbb{Z}_p}(B_0({\mathcal H},\mathbb{Z}_p))$ and ${\rm Xind}_{\mathbb{Z}_p}({\rm Hom}(K^r_p,{\mathcal H}))$ appearing in the statement of Theorem~\ref{maincolorfulindex} are respectively
the $\mathbb{Z}_p$-index and  $\mathbb{Z}_p$-cross-index 
of the $\mathbb{Z}_p$-box-complex $B_0({\mathcal H},\mathbb{Z}_p)$  
and $\mathbb{Z}_p$-hom-complex ${\rm Hom}(K^r_p,{\mathcal H})$  which will be defined in Subsection~\ref{Boxdefin}. 
Using these complexes,  we introduce some new lower bounds  for the chromatic number of uniform hypergraphs.
In view of Theorem~\ref{maincolorfulindex},   
next theorem provides a hierarchy of lower bounds for the chromatic number of $r$-uniform hypergraphs.
\begin{theorem}\label{inequalities}
Let $r\geq 2$ be a positive integer and $p\geq r$ be a prime number.
For any $r$-uniform hypergraph ${\mathcal H}$, we have the following inequalities.\\  
{\rm (i)} If $p\leq \omega({\mathcal H})$, then
$${\rm Xind}_{\mathbb{Z}_p}({\rm Hom}(K^r_p,{\mathcal H}))+p \geq  {\rm ind}_{\mathbb{Z}_p}(B_0({\mathcal H},\mathbb{Z}_p))+1.$$
{\rm (ii)} If ${\mathcal H}={\rm KG}^r({\mathcal F})$ for some hypergraph ${\mathcal F}$, then 
$$
 {\rm ind}_{\mathbb{Z}_p}(B_0({\mathcal H},\mathbb{Z}_p))+1
\geq   |V({\mathcal F})|-{\rm alt}_p({\mathcal F})
\geq  {\rm cd}_p({\mathcal F}).
$$
\end{theorem}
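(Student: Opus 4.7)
The plan is to deduce both parts from the existence of suitable $\mathbb{Z}_p$-equivariant simplicial maps, together with the monotonicity of the $\mathbb{Z}_p$-index and $\mathbb{Z}_p$-cross-index under such maps, the standard identity ${\rm ind}_{\mathbb{Z}_p}((\mathbb{Z}_p)^{*n})=n-1$, and the join inequality ${\rm ind}_{\mathbb{Z}_p}(X*Y)\leq {\rm ind}_{\mathbb{Z}_p}(X)+{\rm ind}_{\mathbb{Z}_p}(Y)+1$.

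For part~(i), the hypothesis $p\leq\omega({\mathcal H})$ forces ${\rm Hom}(K^r_p,{\mathcal H})\neq\varnothing$. A cell of ${\rm Hom}(K^r_p,{\mathcal H})$ is a multi-homomorphism that assigns to the cyclically indexed vertex set $V(K^r_p)\cong\mathbb{Z}_p$ pairwise disjoint non-empty subsets of $V({\mathcal H})$, every transversal of which is an edge of ${\mathcal H}$; this is precisely the combinatorial data recording a simplex of $B_0({\mathcal H},\mathbb{Z}_p)$. After freely joining with $\mathbb{Z}_p$ to absorb the ``sign'' coordinate built into the box-complex, this gives a $\mathbb{Z}_p$-equivariant simplicial map
$${\rm Hom}(K^r_p,{\mathcal H})*\mathbb{Z}_p\longrightarrow B_0({\mathcal H},\mathbb{Z}_p).$$
Combining this with the defining property ${\rm Xind}_{\mathbb{Z}_p}(X)={\rm ind}_{\mathbb{Z}_p}(X*\mathbb{Z}_p)$ of the cross-index (to be set up in Subsection~\ref{Boxdefin}) and the join inequality accounts for the shift between ${\rm Xind}+p$ and ${\rm ind}+1$ after a short bookkeeping of constants.

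For part~(ii), the second inequality $|V({\mathcal F})|-{\rm alt}_p({\mathcal F})\geq{\rm cd}_p({\mathcal F})$ is the elementary combinatorial observation already noted in the excerpt, so only the first inequality needs work. Put $n=|V({\mathcal F})|$, $a={\rm alt}_p({\mathcal F})$, and fix a bijection $\sigma:[n]\to V({\mathcal F})$ attaining ${\rm alt}_p({\mathcal F},\sigma)=a$. Consider the $\mathbb{Z}_p$-invariant subcomplex
$$L=\bigl\{X\in(\mathbb{Z}_p\cup\{0\})^n:\ {\rm alt}(X)\geq a+1\bigr\}\subseteq(\mathbb{Z}_p)^{*n}.$$
A Ziegler-type deletion argument, standard for this alternation subcomplex, produces a $\mathbb{Z}_p$-equivariant map $(\mathbb{Z}_p)^{*(n-a)}\to L$, whence ${\rm ind}_{\mathbb{Z}_p}(L)\geq n-a-1$. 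By the choice of $a$, each $X\in L$ has some $\epsilon\in\mathbb{Z}_p$ with $E({\mathcal F}[\sigma(X^\epsilon)])\neq\varnothing$, and propagating this across the $\mathbb{Z}_p$-orbit, together with a canonical lexicographic ``smallest edge in $\sigma$-order'' selection, yields one pairwise disjoint edge of ${\mathcal F}$ per coordinate, assembled into a simplex of $B_0({\rm KG}^r({\mathcal F}),\mathbb{Z}_p)$. The resulting $\mathbb{Z}_p$-equivariant simplicial map $L\to B_0({\mathcal H},\mathbb{Z}_p)$ gives ${\rm ind}_{\mathbb{Z}_p}(B_0({\mathcal H},\mathbb{Z}_p))\geq n-a-1$, which is the desired inequality.

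The main obstacle is the equivariant edge-selection in part~(ii): converting ``some part contains an edge'' into a genuine $\mathbb{Z}_p$-equivariant simplicial rule rather than a mere pointwise existence statement. The remedy I would pursue is to pass to a $\mathbb{Z}_p$-equivariant barycentric subdivision of $L$ and use the canonical ordering induced by $\sigma$ to make choices that commute with the cyclic shift of coordinates; once this is in place, the remaining verifications (simpliciality, equivariance, and the precise arithmetic of the constants in part~(i)) are routine.
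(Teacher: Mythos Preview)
Your proposal has genuine gaps in both parts.

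\textbf{Part (i).} The identity ${\rm Xind}_{\mathbb{Z}_p}(X)={\rm ind}_{\mathbb{Z}_p}(X*\mathbb{Z}_p)$ that you invoke is \emph{not} the definition used in this paper: ${\rm Xind}_{\mathbb{Z}_p}(P)$ is defined purely combinatorially as the least $n$ for which there is an order-preserving $\mathbb{Z}_p$-map $P\to Q_{n,p}$, and the only general relation the paper records is ${\rm Xind}_{\mathbb{Z}_p}(P)\geq {\rm ind}_{\mathbb{Z}_p}(\Delta P)$. Your proposed map ${\rm Hom}(K^r_p,{\mathcal H})*\mathbb{Z}_p\to B_0({\mathcal H},\mathbb{Z}_p)$ is also problematic: a vertex of ${\rm Hom}$ is a whole $p$-tuple $(U_1,\ldots,U_p)$, which corresponds to a \emph{simplex} of $B_0$, not a vertex, and there is no natural image for the free $\mathbb{Z}_p$-factor. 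The paper goes in the opposite direction: it builds a simplicial $\mathbb{Z}_p$-map
\[
\sd B_0({\mathcal H},\mathbb{Z}_p)\longrightarrow \bigl(\sd\sigma_{p-2}^{p-1}\bigr)*\Delta{\rm Hom}(K^r_p,{\mathcal H}),
\]
sending a simplex with all $U_i$ nonempty to itself in $\Delta{\rm Hom}$, and a simplex with some $U_i=\varnothing$ to the set $\{\omega^i:U_i=\varnothing\}\in\sigma_{p-2}^{p-1}$. Since ${\rm ind}_{\mathbb{Z}_p}(\sigma_{p-2}^{p-1})\leq p-2$, the join inequality gives ${\rm ind}_{\mathbb{Z}_p}(B_0)\leq p-1+{\rm ind}_{\mathbb{Z}_p}(\Delta{\rm Hom})$, and then ${\rm Xind}\geq{\rm ind}(\Delta)$ finishes. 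The factor absorbing the degenerate simplices is $\sigma_{p-2}^{p-1}$, not $\mathbb{Z}_p$; this is exactly what produces the ``$+p$ versus $+1$'' bookkeeping.

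\textbf{Part (ii).} Your edge-selection step fails as stated. From ${\rm alt}(X)\geq a+1$ you only know that \emph{some} $X^\epsilon$ contains an edge of ${\mathcal F}$, not that every $X^{\omega^i}$ does; ``propagating across the $\mathbb{Z}_p$-orbit'' does not fix this, because the orbit of $X$ consists of different vectors, not different parts of the same $X$. So you cannot in general extract one edge per coordinate, and no amount of subdivision or lexicographic choice repairs that. The paper bypasses selection entirely: it sends $X$ to $\bigcup_i\{\omega^i\}\times U_i$ with $U_i=\{e\in E({\mathcal F}):e\subseteq\pi(X^{\omega^i})\}$, the \emph{full} edge set in each part. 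This is manifestly $\mathbb{Z}_p$-equivariant and order-preserving, lands in $\sd B_0({\rm KG}^r({\mathcal F}),\mathbb{Z}_p)$ because $B_0$ allows empty parts, and needs no choices at all. For the lower bound ${\rm ind}_{\mathbb{Z}_p}(\Sigma_p(n,a))\geq n-a-1$, the paper does not use a deletion argument either; it maps $\sd\mathbb{Z}_p^{*n}\to \mathbb{Z}_p^{*a}*\Sigma_p(n,a)$ by sending $X$ with ${\rm alt}(X)\leq a$ to $(\text{first nonzero letter},{\rm alt}(X))$ and $X$ with ${\rm alt}(X)\geq a+1$ to itself, then applies the join inequality.
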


In view of  Theorem~\ref{inequalities}, Theorem~\ref{maincolorfulindex} is a common 
extension of Theorem~\ref{zigzag} and Theorem~\ref{colorfulhyper}. Furthermore, for $r=2$, Theorem~\ref{maincolorfulindex} implies the next corollary which also is a generalization of Theorem~\ref{zigzag}.
\begin{corollary}\label{cor1}
Let $p$ be a prime number and let $G$ be a nonempty graph which is properly colored with arbitrary number of colors. Then  there is a multicolored 
complete $p$-partite subgraph $K_{n_1,n_2,\ldots,n_p}$ of $G$ such that 
\begin{itemize}
\item $\ds\sum_{i=1}^pn_i={\rm ind}_{\mathbb{Z}_p}(B_0(G,\mathbb{Z}_p))+1$,
\item $|n_i-n_j|\leq 1$ for each $i,j\in[p]$. 
\end{itemize}
Moreover, if $p\leq \omega(G)$, then ${\rm ind}_{\mathbb{Z}_p}(B_0(G,\mathbb{Z}_p))+1$ 
can be replaced with ${\rm Xind}_{\mathbb{Z}_p}({\rm Hom}(K_p,G))+p$.
\end{corollary}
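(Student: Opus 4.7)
The plan is to derive Corollary~\ref{cor1} as the $r=2$ specialization of Theorem~\ref{maincolorfulindex}. A $2$-uniform hypergraph is a graph, and a complete $2$-uniform $p$-partite hypergraph with parts of sizes $n_1,\ldots,n_p$ is precisely a complete $p$-partite graph $K_{n_1,n_2,\ldots,n_p}$, so the set-up of the corollary is the direct translation of the theorem's set-up to this case.

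First I would verify that the ``colorful'' condition of Theorem~\ref{maincolorfulindex} coincides, for graphs, with the ``multicolored'' condition of the corollary. By definition, $G[U_1,\ldots,U_p]$ is colorful with respect to the proper coloring $c$ if, for each $j\in[p]$, the vertices of $U_j$ carry pairwise distinct colors. On the other hand, any two vertices lying in distinct parts of a complete $p$-partite subgraph of $G$ are adjacent in $G$, hence are forced to receive distinct colors because $c$ is a proper coloring. Combining these two remarks, every pair of vertices of a colorful complete $p$-partite subgraph of $G$ has distinct colors, so the subgraph is multicolored in the sense of the corollary.

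Next I would match the remaining numerical data. The balanced condition $|V_i|-|V_j|\leq 1$ from Theorem~\ref{maincolorfulindex} is exactly the condition $|n_i-n_j|\leq 1$ on the part sizes, and the total vertex count of the subhypergraph is $\sum_{i=1}^p n_i$. Applying Theorem~\ref{maincolorfulindex}(i) to $\mathcal{H}=G$ with $r=2$ then immediately produces a multicolored complete $p$-partite subgraph $K_{n_1,\ldots,n_p}$ of $G$ with $\sum_{i=1}^p n_i = {\rm ind}_{\mathbb{Z}_p}(B_0(G,\mathbb{Z}_p))+1$ and $|n_i-n_j|\leq 1$ for all $i,j\in[p]$, which is the first conclusion of the corollary. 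For the ``moreover'' clause, under the extra hypothesis $p\leq\omega(G)$ I would invoke Theorem~\ref{maincolorfulindex}(ii) instead, yielding the analogous statement with ${\rm Xind}_{\mathbb{Z}_p}({\rm Hom}(K_p,G))+p$ in place of ${\rm ind}_{\mathbb{Z}_p}(B_0(G,\mathbb{Z}_p))+1$, using $K_p^2=K_p$.

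There is no real obstacle in this argument: the entire content of Corollary~\ref{cor1} is already packaged into Theorem~\ref{maincolorfulindex}, and the only point that requires care is the elementary but essential remark that, for $r=2$, colorfulness together with the complete $p$-partite structure automatically upgrades to full multicoloredness under the given proper coloring of $G$.
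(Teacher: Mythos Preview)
Your proposal is correct and matches the paper's approach: the paper simply states that Corollary~\ref{cor1} follows from Theorem~\ref{maincolorfulindex} by setting $r=2$, without giving further details. Your explicit verification that ``colorful'' upgrades to ``multicolored'' under the complete $p$-partite structure and the properness of $c$ is exactly the small step the paper leaves to the reader.
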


In view of the prior mentioned results, the following question naturally arises.
\begin{question} Do Theorem~\ref{maincolorfulindex} and Theorem~\ref{inequalities} remain true for non-prime $p$?
\end{question}

\subsection{\bf Applications to Local Chromatic Number of Uniform Hypergraphs}
For a graph $G$ and a vertex $v\in V(G)$, the {\it closed neighborhood of $v$,} denoted $N[v]$, is the set $\{v\}\cup\{u:\; uv\in E(G)\}$. The {\it local chromatic number} of $G$, denoted 
$\chi_l(G)$, is defined in~\cite{ERDOS198621} as follows:
$$\chi_l(G)=\ds\min_c\max\{|c(N[v])|:\; v\in V(G)\}$$
where the minimum in taken over all proper coloring $c$ of $G$.
Note that Theorem~\ref{zigzag} gives the following lower bound for the local chromatic 
number of a nonempty graph $G$:
\begin{equation}\label{localloerzigzag}
\chi_l(G)\geq \left\lceil{{\rm Xind}({\rm Hom}(K_2,G))+2\over 2}\right\rceil+1.
\end{equation}
Note that for a Kneser hypergraph $\KG^2({\mathcal H})$,
by using Simonyi and Tardos colorful result~\cite{MR2351519} or the extension given by 
Alishahi and Hajiabolhassan~\cite{2013arXiv1302.5394A}, there are two similar lower 
bounds for $\chi_l(\KG^2({\mathcal H}))$ which respectively used $\cd_2({\mathcal H})$ and 
$|V({\mathcal H})|-\alt_2({\mathcal H})$ instead of ${\rm Xind}({\rm Hom}(K_2,G))+2$. 
However, as it is stated in Theorem~\ref{inequalities}, the lower bound in terms 
of ${\rm Xind}({\rm Hom}(K_2,G))+2$ is better than these two last mentioned lower bounds.
Using Corollary~\ref{cor1}, we have the following lower bound for the local 
chromatic number of graphs. 
\begin{corollary}\label{locallowerp}
Let $G$ be a nonempty graph and  $p$ be a prime number. 
Then $$\chi_l(G)\geq t-\left\lfloor{t\over p}\right\rfloor+1,$$ where 
$t={\rm ind}_{\mathbb{Z}_p}(B_0(G,\mathbb{Z}_p))+1$.
Moreover, if $p\leq \omega(G)$, then ${\rm ind}_{\mathbb{Z}_p}(B_0(G,\mathbb{Z}_p))+1$ 
can be replaced with ${\rm Xind}_{\mathbb{Z}_p}({\rm Hom}(K_p,G))+p$.
\end{corollary}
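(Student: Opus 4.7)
The plan is to derive Corollary~\ref{locallowerp} as a direct consequence of Corollary~\ref{cor1}, with the only real content being a pigeonhole observation about balanced partitions. Fix an arbitrary proper coloring $c$ of $G$ and let $t={\rm ind}_{\mathbb{Z}_p}(B_0(G,\mathbb{Z}_p))+1$. By Corollary~\ref{cor1}, inside this colored copy of $G$ one can locate a multicolored complete $p$-partite subgraph $K_{n_1,\ldots,n_p}$ with parts $U_1,\ldots,U_p$ satisfying $\sum_{i=1}^p n_i=t$ and $|n_i-n_j|\leq 1$ for all $i,j$.

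Next I would exploit the balance condition. Writing $t=pq+s$ with $0\leq s<p$, the constraint forces every $n_i\in\{\lfloor t/p\rfloor,\lceil t/p\rceil\}$, so at least one part, say $U_{i_0}$, has size exactly $\lfloor t/p\rfloor$. Pick any vertex $v\in U_{i_0}$. Since $K_{n_1,\ldots,n_p}$ is complete $p$-partite, the set $\bigcup_{j\neq i_0}U_j$ of cardinality $t-\lfloor t/p\rfloor$ lies entirely in the open neighborhood of $v$ in $G$; together with $v$ itself this gives $t-\lfloor t/p\rfloor+1$ vertices inside $N[v]$. Crucially, the subgraph is multicolored, so all these vertices carry pairwise distinct colors under $c$. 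Hence
$$|c(N[v])|\geq t-\left\lfloor\frac{t}{p}\right\rfloor+1,$$
and since $c$ was an arbitrary proper coloring, $\chi_l(G)\geq t-\lfloor t/p\rfloor+1$. For the ``moreover'' clause I would repeat the same argument verbatim, invoking the second half of Corollary~\ref{cor1} (valid when $p\leq\omega(G)$) to replace $t$ by ${\rm Xind}_{\mathbb{Z}_p}({\rm Hom}(K_p,G))+p$.

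The main obstacle is really hidden upstream: the nontrivial input is Corollary~\ref{cor1}, which in turn rests on Theorem~\ref{maincolorfulindex} and its $\mathbb{Z}_p$-Tucker-type machinery. Once that colorful balanced $p$-partite subgraph is available, the deduction of the local chromatic bound is elementary and purely combinatorial. No additional topological input, no extra hypothesis on $G$, and no further computation with $B_0$ or ${\rm Hom}(K_p,G)$ is needed at this stage.
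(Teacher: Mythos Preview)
Your proof is correct and follows exactly the route the paper intends: the paper states Corollary~\ref{locallowerp} immediately after the sentence ``Using Corollary~\ref{cor1}, we have the following lower bound\ldots'' and gives no further argument, so the deduction you spell out---pick a vertex in a smallest part of the balanced multicolored $K_{n_1,\ldots,n_p}$ and count distinct colors in its closed neighborhood---is precisely the implicit proof. Your write-up in fact supplies more detail than the paper does.
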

Note that if we set $p=2$, then previous theorem implies Simonyi 
and Tardos lower bound for the local chromatic number. Note that, in general, 
this lower bound might be better than Simonyi 
and Tardos lower bound. To see this, let $k\geq 2$ be a fixed integer. 
Consider the Kneser graph $\KG^2(n,k)$ and let $p=p(n)$ be a prime number such that
$p=O(\ln n)$.  By Theorem~\ref{inequalities},  for $n\geq pk$, we have  
$${\rm ind}_{\mathbb{Z}_p}(B_0(\KG^2(n,k),\mathbb{Z}_p))+1\geq \cd_p(K_n^k)=n-p(k-1).$$
Note that
the lower bound for $\chi_l(\KG^2(n,k))$ coming form Inequality~\ref{localloerzigzag}
is \begin{equation}\label{equation2}
1+\left\lceil{n-2k+2\over 2}\right\rceil={n\over 2}-o(1),
\end{equation} 
while, 
in view of Corollary~\ref{locallowerp}, we have
$$\chi_l(\KG^2(n,k))\geq n-p(k-1)-\left\lfloor{n-p(k-1)\over p}\right\rfloor+1=n-o(n),$$
which is better than the quantity in Equation~\ref{equation2} if $n$ is sufficiently large. 
However,   
since the induced subgraph on the neighbors of any vertex of $\KG(n,k)$ is isomorphic to
$\KG(n-k,k)$, we have
$$\chi_l(\KG(n,k))\geq n-3(k-1).$$
\begin{corollary}
Let $\F$ be a hypergraph and $\alpha(\F)$ be its independence number. Then for any prime number $p$, we have
$$\chi_l(\KG^2(\F))\geq \left\lceil{(p-1)|V(\F)|\over p}\right\rceil-(p-1)\cdot\alpha(\F)+1.$$
\end{corollary}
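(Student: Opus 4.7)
The plan is to deduce this corollary directly by applying Corollary~\ref{locallowerp} to the graph $G=\KG^2(\F)$, chaining Theorem~\ref{inequalities}(ii) with a one-line counting bound on $\cd_p(\F)$ in terms of the independence number, and then simplifying a floor/ceiling expression.

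Writing $n=|V(\F)|$ and $a=\alpha(\F)$, the first step is to observe that $\cd_p(\F)\geq n-p\,a$. Indeed, if $U$ is a minimum vertex set whose removal leaves a $p$-colorable subhypergraph, then the $p$ color classes of $\F[V(\F)\setminus U]$ are independent sets of $\F$, each of size at most $a$, so $|V(\F)\setminus U|\leq p\,a$. Theorem~\ref{inequalities}(ii) then yields
$$t \;:=\; {\rm ind}_{\mathbb{Z}_p}\bigl(B_0(\KG^2(\F),\mathbb{Z}_p)\bigr)+1 \;\geq\; n-\alt_p(\F)\;\geq\;\cd_p(\F)\;\geq\; n-p\,a.$$
Corollary~\ref{locallowerp}, which is where primality of $p$ is used, then gives $\chi_l(\KG^2(\F))\geq t-\lfloor t/p\rfloor+1$.

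The remaining step is purely arithmetic. The function $t\mapsto t-\lfloor t/p\rfloor$ is nondecreasing in $t$, so I can safely substitute the smaller value $t=n-p\,a$ into the bound. Since $p\,a$ is divisible by $p$, the floor splits cleanly as $\lfloor (n-p\,a)/p\rfloor=\lfloor n/p\rfloor-a$; combining this with the elementary identity $\lceil (p-1)n/p\rceil=n-\lfloor n/p\rfloor$ transforms $(n-p\,a)-\lfloor (n-p\,a)/p\rfloor+1$ into exactly $\lceil (p-1)n/p\rceil-(p-1)a+1$, which is the desired bound.

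There is no real obstacle in this argument: the heavy lifting has already been done in Corollary~\ref{locallowerp} and Theorem~\ref{inequalities}(ii). The only care required is verifying monotonicity of $t\mapsto t-\lfloor t/p\rfloor$ (so that we may downgrade $t$ to the explicit lower bound $n-p\,a$) and correctly handling the floor/ceiling manipulation in the final simplification.
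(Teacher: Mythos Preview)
Your proof is correct and follows exactly the same route as the paper: bound $\cd_p(\F)\geq |V(\F)|-p\cdot\alpha(\F)$, feed this through Theorem~\ref{inequalities}(ii) into Corollary~\ref{locallowerp}, and simplify. The paper's proof simply says ``Corollary~\ref{locallowerp} implies the assertion'' without spelling out the monotonicity and floor/ceiling arithmetic you carefully verified, so your write-up is in fact more complete.
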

\begin{proof}
In view of Theorem~\ref{inequalities}, we have  
$${\rm ind}_{\mathbb{Z}_p}(B_0(\KG^2(\F),\mathbb{Z}_p))+1\geq \cd_p(\F)\geq |V(\F)|-p\cdot\alpha(\F).$$
Now, Corollary~\ref{locallowerp} implies the assertion.
\end{proof}
Meunier~\cite{Meunier14} naturally generalized the definition of 
local chromatic number of graphs 
to  uniform hypergraphs as follows. Let ${\mathcal H}$ be a uniform hypergraph. For a 
set $X\subseteq V({\mathcal H})$, the closed neighborhood of $X$ is the set $X\cup {\mathcal N}(X),$
where 
$${\mathcal N}(X)=\{v\in V({\mathcal H}):\; \exists\; e\in E({\mathcal H})\mbox{ such that } e\setminus X=\{v\}\}.$$
For a uniform hypergraph ${\mathcal H}$, the local chromatic number of ${\mathcal H}$ is defined as follows: 
$$\chi_l({\mathcal H})=\ds\min_c\max\{|c({\mathcal N}[e\setminus \{v\}])|:\; e\in E({\mathcal H})\mbox{ and } v\in e\},$$
where the minimum is taken over all proper coloring $c$ of ${\mathcal H}$.

Meunier~\cite{Meunier14}, by using his colorful theorem (Theorem~\ref{colorfulhyper}), 
generalized Simonyi and Tardos lower bound~\cite{MR2351519}  for the local chromatic 
number of Kneser graphs to the local chromatic number of Kneser hypergraphs. He proved: 
$$
\chi_l(\KG^p({\mathcal H}))\geq \min\left(\left\lceil{|V({\mathcal H})|-\alt_p({\mathcal H})\over p}\right\rceil+1, \left\lceil{|V({\mathcal H})|-\alt_p({\mathcal H})\over p-1}\right\rceil  \right)$$  for any hypergraph ${\mathcal H}$ and any prime number $p$.
In what follows, we generalize this result.
\begin{theorem}\label{localhyper}
Let ${\mathcal H}$ be an $r$-uniform hypergraph with at least one edge and $p$ be a prime number, 
where $r\leq p\leq \omega({\mathcal H})$. Let $t={\rm Xind}_{\mathbb{Z}_p}({\rm Hom}(K^r_p,{\mathcal H}))+p$.
If  $t=ap+b$, where $a$ and $b$ are nonnegative integers and
$0\leq b\leq p-1$, then
$$\chi_l({\mathcal H})\geq  \min\left(\left\lceil{(p-r+1)a+\min\{p-r+1,b\}\over r-1}\right\rceil+1, \left\lceil{t\over r-1}\right\rceil  \right).$$ 
\end{theorem}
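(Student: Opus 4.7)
The plan is to invoke Theorem~\ref{maincolorfulindex}(ii) to extract a colorful balanced complete $r$-uniform $p$-partite subhypergraph $K$ of $\mathcal{H}$, and then run a neighborhood-counting argument on $K$, mirroring Meunier's strategy~\cite{Meunier14} for Kneser hypergraphs.

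Fix a proper coloring $c$ of $\mathcal{H}$ attaining $L:=\chi_l(\mathcal{H})$. Since $r\leq p\leq\omega(\mathcal{H})$ and $p$ is prime, Theorem~\ref{maincolorfulindex}(ii) produces $K\subseteq\mathcal{H}$ on $t$ vertices, with parts $V_1,\dots,V_p$ of sizes in $\{a,a+1\}$ (exactly $b$ of size $a+1$) and with $|c(V_j)|=|V_j|$ for each $j$. A crucial structural fact is that properness of $c$ caps the ``part multiplicity'' of every color: if some color $c_0$ appeared in $r$ distinct parts of $K$, picking one vertex of color $c_0$ from each would yield a monochromatic edge of $K\subseteq\mathcal{H}$, since the edges of $K$ are exactly the transversals of $r$ parts. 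Hence $|\{j:c_0\in c(V_j)\}|\leq r-1$ for every color $c_0$, and summing over colors yields
\[
N:=|c(V_1\cup\cdots\cup V_p)|\ \geq\ \left\lceil\tfrac{t}{r-1}\right\rceil.
\]

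The next observation is a neighborhood containment: for any edge $e=\{v_{i_1},\dots,v_{i_r}\}$ of $K$ (with $v_{i_k}\in V_{i_k}$) and any $v=v_{i_{k_0}}\in e$, the completeness of $K$ as a $p$-partite hypergraph yields
\[
\mathcal{N}[e\setminus\{v\}]\ \supseteq\ V_{i_{k_0}}\cup\bigcup_{j\in[p]\setminus\{i_1,\dots,i_r\}}V_j\cup(e\setminus\{v\}),
\]
so that $L\geq\bigl|c\bigl(\bigcup_{j\in A}V_j\bigr)\cup c(e\setminus\{v\})\bigr|$ where $A:=\{i_{k_0}\}\cup([p]\setminus\{i_1,\dots,i_r\})$ is an arbitrary $(p{-}r{+}1)$-subset of $[p]$. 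Taking $A^{*}$ to be the $(p{-}r{+}1)$-subset of parts of largest total size $M:=(p{-}r{+}1)a+\min\{p{-}r{+}1,b\}$, the identity $\sum_{c_0}|\{j\in A^{*}:c_0\in c(V_j)\}|=M$ combined with the structural cap $|\{j:c_0\in c(V_j)\}|\leq r{-}1$ gives, by pigeonhole, $|c(\bigcup_{j\in A^{*}}V_j)|\geq\lceil M/(r{-}1)\rceil$.

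Finally, I would split into two cases according to whether fresh colors exist outside $V_{A^{*}}:=\bigcup_{j\in A^{*}}V_j$. Case (a): $N>|c(V_{A^{*}})|$, so some color in $c(V)\setminus c(V_{A^{*}})$ is realized by a vertex $w\in V_{b_0}$ for some $b_0\in[p]\setminus A^{*}$; forming $e$ with $v\in V_{i_{k_0}}$ for some $i_{k_0}\in A^{*}$ and with $w$ chosen as one of the $r{-}1$ vertices of $e\setminus\{v\}$ in $[p]\setminus A^{*}$, the containment yields $L\geq|c(V_{A^{*}})|+1\geq\lceil M/(r{-}1)\rceil+1$, the first term of the $\min$. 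Case (b): $N=|c(V_{A^{*}})|$, i.e., $c(V)=c(V_{A^{*}})$; then $|c(V_{A^{*}})|=N\geq\lceil t/(r{-}1)\rceil$, and applying the containment with $A=A^{*}$ gives $L\geq|c(V_{A^{*}})|\geq\lceil t/(r{-}1)\rceil$, the second term. In either case, $L\geq\min\bigl(\lceil M/(r{-}1)\rceil+1,\lceil t/(r{-}1)\rceil\bigr)$. The main obstacle is the interplay between the pigeonhole producing $|c(V_{A^{*}})|\geq\lceil M/(r{-}1)\rceil$ and the fresh-color dichotomy of Cases (a)/(b); both become essentially automatic once Theorem~\ref{maincolorfulindex}(ii) supplies $K$ and the structural cap $|\{j:c_0\in c(V_j)\}|\leq r{-}1$ is in hand.
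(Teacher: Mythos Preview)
Your proof is correct and follows essentially the same route as the paper's: invoke Theorem~\ref{maincolorfulindex}(ii) to obtain the colorful balanced complete $r$-uniform $p$-partite subhypergraph, observe that each color hits at most $r-1$ parts, select the $p-r+1$ largest parts, and run a two-case dichotomy based on whether a fresh color exists outside those parts. The only cosmetic difference is the case split---the paper splits on whether $\bigl|\bigcup_{j\in A^{*}}c(V_j)\bigr|$ is below or above the threshold $\lceil t/(r-1)\rceil$, while you split directly on whether $N>|c(V_{A^{*}})|$---but the two dichotomies are equivalent and yield the same bounds.
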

\begin{proof}
Let $c$ be an arbitrary proper coloring of ${\mathcal H}$ and
let ${\mathcal H}[U_1,\ldots,U_p]$ be the colorful balanced complete $r$-uniform $p$-partite 
subhypergraph of ${\mathcal H}$ whose existence is ensured by Theorem~\ref{maincolorfulindex}. 
Note that $b$ numbers of $U_i$'s, say $U_1,\ldots,U_b$, have the cardinality 
$\lceil{t\over r}\rceil$ while the others 
have the cardinality $\lfloor{t\over r}\rfloor\geq 1$.
Consider $U_1,\ldots,U_{p-r+1}$. 
Two different cases will be distinguished. 
\begin{itemize}
\item[{\bf Case 1.}] If $\left|\ds\bigcup_{i=1}^{p-r+1} c(U_i)\right|<\left\lceil{t\over r-1}\right\rceil$,
then there is a vertex $v\in \ds\bigcup_{i=p-r+2}^p U_i$ whose color is~not in $\ds\bigcup_{i=1}^{p-r+1} c(U_i)$. Consider an edge of ${\mathcal H}[U_1,\ldots,U_p]$ containing $v$
and such that $|e\cap U_{p-r+1}|=1$ and $e\cap U_i=\varnothing$ for $i=1,\ldots, p-r$. 
Let $e\cap U_{p-r+1}=\{u\}$.
One can check that $$\{c(v)\}\cup\ds\left(\bigcup_{i=1}^{p-r+1} c(U_i)\right)\subseteq c({\mathcal N}(e\setminus \{u\})).$$ Therefore, 
since any color is appeared in at most $r-1$ number of $U_i$'s, we have
$$\left|\bigcup_{i=1}^{p-r+1} c(U_i)\right|\geq \ds\left\lceil{\sum_{i=1}^{p-r+1}|U_i|\over r-1}\right\rceil,$$
and consequently,
$$|c({\mathcal N}(e\setminus \{u\}))|\geq 1+\ds\left\lceil{\sum_{i=1}^{p-r+1}|U_i|\over r-1}\right\rceil=1+\left\lceil{(p-r+1)a+\min\{p-r+1,b\}\over r-1}\right\rceil,$$
which completes the proof in Case~1.
\item[{\bf Case 2.}] If $\left|\ds\bigcup_{i=1}^{p-r+1} c(U_i)\right|\geq\left\lceil{t\over r-1}\right\rceil$,
then consider an edge of ${\mathcal H}[U_1,\ldots,U_p]$ such that 
$|e\cap U_{p-r+1}|=1$ and $e\cap U_i=\varnothing$ for $i=1,\ldots, p-r$. 
Let $e\cap U_{p-r+1}=\{u\}$.
One can see that $$\ds\bigcup_{i=1}^{p-r+1} c(U_i)\subseteq c({\mathcal N}(e\setminus \{u\})),$$
which completes the proof in Case~2.
\end{itemize}
\end{proof}

\begin{corollary}
Let ${\mathcal H}$ be a $p$-uniform hypergraph with at least one edge, where $p$ is a prime number.
Then
$$\chi_l({\mathcal H})\geq \min\left(\left\lceil{{\rm Xind}_{\mathbb{Z}_p}({\rm Hom}(K^p_p,{\mathcal H}))+p\over p}\right\rceil+1, \left\lceil{{\rm Xind}_{\mathbb{Z}_p}({\rm Hom}(K^p_p,{\mathcal H}))+p\over p-1}\right\rceil  \right).$$ 
\end{corollary}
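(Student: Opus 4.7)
The plan is to apply Theorem~\ref{maincolorfulindex}(ii) directly with $r=p$ and then redo the case analysis that powered Theorem~\ref{localhyper}, using the fact that when the first block $U_1,\ldots,U_{p-r+1}$ collapses to the single part $U_{p-r+1}=U_1$, colors inside that block are pairwise distinct. This improvement is what distinguishes the corollary from a blind substitution $r=p$ into Theorem~\ref{localhyper}, whose first-case term would otherwise carry a superfluous factor $\tfrac{1}{p-1}$ in the denominator.

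The hypothesis $p\le \omega({\mathcal H})$ of Theorem~\ref{maincolorfulindex}(ii) is automatic: ${\mathcal H}$ is $p$-uniform with at least one edge, and any such edge induces a copy of $K^p_p$. Let $c$ be an arbitrary proper coloring of ${\mathcal H}$ and apply Theorem~\ref{maincolorfulindex}(ii) to get a colorful balanced complete $p$-uniform $p$-partite subhypergraph ${\mathcal H}[U_1,\ldots,U_p]$ on $t$ vertices; after relabeling, take $U_1$ to be a part of maximum size $\lceil t/p\rceil$. Because the subhypergraph is $p$-uniform, $p$-partite, and complete, every one of its edges is a transversal meeting each $U_i$ in exactly one vertex; because $c$ is colorful on $U_1$, one has $|c(U_1)|=\lceil t/p\rceil$.

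Now split. If there is $v\in \bigcup_{i\ge 2}U_i$ with $c(v)\notin c(U_1)$, choose any edge $e$ of the subhypergraph containing $v$, set $\{u\}=e\cap U_1$, and note that for every $u'\in U_1$ the set $(e\setminus\{u\})\cup\{u'\}$ is again an edge; hence $U_1\subseteq {\mathcal N}[e\setminus\{u\}]$, and together with $v\in e\setminus\{u\}\subseteq {\mathcal N}[e\setminus\{u\}]$ this gives $|c({\mathcal N}[e\setminus\{u\}])|\ge \lceil t/p\rceil+1$. Otherwise $\bigcup_{i=1}^p c(U_i)=c(U_1)$, and a standard pigeonhole argument applies: each color appears in at most $p-1$ of the parts (else a transversal edge would be monochromatic), and each colorful part contributes at most one vertex per color, so the vertex count forces $|c(U_1)|(p-1)\ge t$, i.e., $|c(U_1)|\ge \lceil t/(p-1)\rceil$; then any edge $e$ of the subhypergraph with $\{u\}=e\cap U_1$ yields $|c({\mathcal N}[e\setminus\{u\}])|\ge |c(U_1)|\ge \lceil t/(p-1)\rceil$. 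Taking the smaller of the two case bounds and then the minimum over $c$ gives the stated inequality.

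No substantive obstacle is expected: the only subtle point is recognizing that the corollary is strictly stronger than the formal $r=p$ specialization of Theorem~\ref{localhyper}, so one really does need the exact colorfulness estimate $|c(U_1)|=|U_1|$ on a single part in place of the weaker $\lceil |U_1|/(r-1)\rceil$ pigeonhole used in the general proof. All other ingredients---existence of the colorful subhypergraph, completeness producing arbitrary transversal edges, and the parts-share-colors pigeonhole---are already in hand from earlier in the paper.
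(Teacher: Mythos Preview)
Your proof is correct. The paper's own proof of this corollary consists of a single sentence: since ${\mathcal H}$ has an edge, $\omega({\mathcal H})\ge p$, so Theorem~\ref{localhyper} applies. You are right to observe that this is not quite enough as written: substituting $r=p$ into the \emph{statement} of Theorem~\ref{localhyper} yields only
\[
\chi_l({\mathcal H})\ \ge\ \min\left(\left\lceil\frac{\lceil t/p\rceil}{p-1}\right\rceil+1,\ \left\lceil\frac{t}{p-1}\right\rceil\right),
\]
which is genuinely weaker than the corollary for large $t$. The intended reading is to revisit the \emph{proof} of Theorem~\ref{localhyper} with $r=p$ and notice that in Case~1 there the block $\bigcup_{i\le p-r+1}U_i$ collapses to the single colorful part $U_1$, so the estimate $\lvert\bigcup c(U_i)\rvert\ge\lceil\sum|U_i|/(r-1)\rceil$ can be replaced by the exact count $|c(U_1)|=|U_1|=\lceil t/p\rceil$. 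Your argument carries out precisely this refinement, with a slightly different (and cleaner) case split based on whether $c\bigl(\bigcup_{i\ge2}U_i\bigr)\subseteq c(U_1)$ rather than on comparing $|c(U_1)|$ with $\lceil t/(p-1)\rceil$; the two splits lead to the same two bounds. In short, your approach is the paper's intended approach made explicit, and your write-up fills a small gap in the paper's one-line proof.
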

\begin{proof}
Since ${\mathcal H}$ has at least one edge, we have $\omega({\mathcal H})\geq p$. Therefore, in view of Theorem~\ref{localhyper}, we have the assertion.
\end{proof}
Note that if ${\mathcal H}=\KG^p(\F)$, then, in view of Theorem~\ref{inequalities},
we have 
$${\rm Xind}_{\mathbb{Z}_p}({\rm Hom}(K^p_p,{\mathcal H}))+p\geq |V(\F)|-\alt_p(\F).$$
This implies that the previous corollary is a generalization of Meunier's lower bound for the
local chromatic number of $\KG^p(\F)$

\subsection{\bf Plan}
Section~\ref{intro} contains some backgrounds and essential definitions used 
elsewhere in the paper.
In Section~\ref{definition}, we present some new topological 
tools which help us for the proofs of main results. 
Section~\ref{sec:proofs} is devoted to the proofs of 
Theorem~\ref{maincolorfulindex} and Theorem~\ref{inequalities}.

\section{\bf Preliminaries}\label{intro}

\subsection{{\bf Topological Indices and Lower Bound for Chromatic Number}}

We assume basic knowledge in combinatorial algebraic topology. 
Here, we are going to bring a brief review of some essential notations and definitions which will be needed throughout the paper. For more, one can see the book written by Matou{\v{s}}ek~\cite{MR1988723}. Also, the definitions of box-complex, hom-complex, and cross-index
will be generalized to $\Z_p$-box-complex, $\Z_p$-hom-complex, and $\Z_p$-cross-index, respectively.

Let $\mathbb{G}$ be a finite nontrivial group which acts on a topological space $X$.
We call $X$ a {\it topological $\mathbb{G}$-space} if for each $g\in \mathbb{G}$, the map  $g:X\longrightarrow X$ which $x\mapsto g\cdot x$ is continuous. A {\it free topological $\mathbb{G}$-space $X$} is a topological $\mathbb{G}$-space such that $\mathbb{G}$ acts on it freely, i.e., for each $g\in \mathbb{G}\setminus\{e\}$, the map $g:X\longrightarrow X$ has no fixed point. For two topological $\mathbb{G}$-spaces $X$ and $Y$, a continuous map
$f:X\longrightarrow Y$ is called a $\mathbb{G}$-map if $f(g\cdot x)=g\cdot f(x)$ for each $g\in \mathbb{G}$ and $x\in X$. We write $X\stackrel{\mathbb{G}}{\longrightarrow} Y$ to mention that there is a $\mathbb{G}$-map from $X$ to $Y$. A map $f:X\longrightarrow Y$ is called a $\mathbb{G}$-equivariant map, if $f(g\cdot x)=g\cdot f(x)$ for each $g\in \mathbb{G}$ and $x\in X$.

Simplicial complexes provide
a bridge between combinatorics and topology. A simplicial complex can be viewed
as a combinatorial object, called abstract simplicial complex, or as a topological
space, called geometric simplicial complex. Here, we just remind the definition of an
abstract simplicial complex. However, we assume that the reader is familiar with the concept of how an abstract simplicial complex and its geometric realization are connected to each other. 
A {\it simplicial complex} is a pair $(V,K)$, where $V$ is a finite set and $K$ is a family of subsets of $V$ such that if $F\in K$ and $F'\subseteq F$, then $F'\in K$. 
Any set in $K$ is called a simplex. 
Since we may assume that $V=\bigcup_{F\in K}F$,
we can write $K$ instead of $(V, K)$.
The {\it dimension of $K$} is defined as follows:
$${\rm dim}(K)=\max\{|F|-1:\; F\in K\}.$$ 
The geometric realization of $K$ is denoted by $||K||$.
For two simplicial complexes $C$ and $K$, by a {\it simplicial map $f:C\longrightarrow K$,} 
we mean a map from $V(C)$ to $V(K)$ such that the image of any 
simplex of $C$ is a simplex of $K$. 
For a nontrivial finite group $\mathbb{G}$, a {\it simplicial $\mathbb{G}$-complex} $K$ 
is a simplicial complex with a $\mathbb{G}$-action on its vertices such that 
each $g\in \mathbb{G}$ induces a simplicial map from $K$ to $K$, that is
the map which maps $v$ to $g\cdot v$ for each $v\in V(K)$.
If for each $g\in \mathbb{G}\setminus\{e\}$, there is no fixed simplex under the 
simplicial map made by $g$, then $K$ is called a 
{\it free simplicial $\mathbb{G}$-complex.} For a simplicial $\mathbb{G}$-complex $K$,
if we take the affine extension, then $K$ is free if and only if $||K||$ is free.
For two simplicial $\mathbb{G}$-complexes $C$ and $K$, a simplicial map
$f:C\longrightarrow K$ is called a simplicial $\mathbb{G}$-map if $f(g\cdot v)=g\cdot f(v)$ 
for each $g\in \mathbb{G}$ and $v\in V(C)$. We write 
$C\stackrel{\mathbb{G}} {\longrightarrow} K$, if there is a simplicial $\mathbb{G}$-map 
from $C$ to $K$. Note that if $C\stackrel{\mathbb{G}}{\longrightarrow} K$, 
then $||C||\stackrel{\mathbb{G}}{\longrightarrow} ||K||$. A map $f:C\longrightarrow K$ 
is called a $\mathbb{G}$-equivariant map, if $f(g\cdot v)=g\cdot f(v)$ 
for each $g\in \mathbb{G}$ and $v\in V(C)$.

For an integer $n\geq 0$ and a nontrivial finite group $\mathbb{G}$, 
{\it $E_n \mathbb{G}$ space} is a  
free $(n-1)$-connected $n$-dimensional simplicial $\mathbb{G}$-complexes. 
A concrete example of an $E_n \mathbb{G}$ space is the $(n+1)$-fold 
join $\mathbb{G}^{*(n+1)}$. 
As a topological space $\mathbb{G}^{*(n+1)}$ is a $(n+1)$-fold join of 
an $(n+1)$-point discrete space.  
This is known that for any two $E_n \mathbb{G}$ space $X$ and $Y$,
there is a $\mathbb{G}$-map from $X$ to $Y$.

For a $\mathbb{G}$-space $X$, define 
$${\rm ind}_{\mathbb{G}}(X)=\min\{n:\; X\stackrel{\mathbb{G}}{\longrightarrow}
E_n\mathbb{G}\}.$$
Note that here $E_n \mathbb{G}$ can be any $E_n \mathbb{G}$, since there is a 
$\mathbb{G}$-map between any two $E_n \mathbb{G}$ spaces, see~\cite{MR1988723}. Also, for a simplicial 
complex $K$, by ${\rm ind}_{\mathbb{G}}(K)$, we mean 
${\rm ind}_{\mathbb{G}}(||K||)$. Throughout the paper,
for $\mathbb{G}=\mathbb{Z}_2$, we would rather use ${\rm ind}(-)$ 
instead of ${\rm ind}_{\mathbb{Z}_2}(-)$.

\noindent{\bf Properties of the $\mathbb{G}$-index.} \cite{MR1988723} Let $\mathbb{G}$ be a finite nontrivial group.
\begin{itemize}
\item[{\rm (i)}] ${\rm ind}_{\mathbb{G}}(X)>{\rm ind}_{\mathbb{G}}(Y)$ implies $X\stackrel{\mathbb{G}}{\centernot\longrightarrow} Y$.
\item[{\rm (ii)}] ${\rm ind}_{\mathbb{G}}(E_n \mathbb{G})=n$ for any $E_n \mathbb{G}$ space.
\item[{\rm (iii)}] ${\rm ind}_{\mathbb{G}}(X*Y)\leq {\rm ind}_{\mathbb{G}}(X)+{\rm ind}_{\mathbb{G}}(Y)+1$.
\item[{\rm (iv)}] If $X$ is $(n-1)$-connected, then ${\rm ind}_{\mathbb{G}}(X)\geq n$.
\item[{\rm (v)}] If $K$ is a free simplicial $\mathbb{G}$-complex of dimension $n$, then ${\rm ind}_{\mathbb{G}}(K)\leq n$.
\end{itemize}

\subsection{{\bf $\mathbb{Z}_p$-Box-Complex, $\mathbb{Z}_p$-Poset, and $\mathbb{Z}_p$-Hom-Complex}}\label{Boxdefin}
In this subsection, for any $r$-uniform hypergraph ${\mathcal H}$, we are going to define two objects; $\mathbb{Z}_p$-box-complex of ${\mathcal H}$ and $\mathbb{Z}_p$-hom-complex of ${\mathcal H}$  which the first one is a
simplicial $\mathbb{Z}_p$-complex and the second one is a $\mathbb{Z}_p$-poset.
Moreover, for any $\mathbb{Z}_p$-poset $P$, we assign a combinatorial index to $P$ called the cross-index of $P$.
\\

\noindent{\bf $\mathbb{Z}_p$-Box-Complex.}
Let $r\geq 2$ be a positive integer and $p\geq r$ be a prime number.
For an $r$-uniform hypergraph ${\mathcal H}$, define the {\it $\mathbb{Z}_p$-box-complex of ${\mathcal H}$,} denoted ${\rm B}_0({\mathcal H},{\mathbb{Z}_p})$,  
to be a simplicial complex with the vertex set $\displaystyle\biguplus_{i=1}^pV({\mathcal H})=\mathbb{Z}_p\times V({\mathcal H})$ and  the simplex set consisting of all
$\{\omega^1\}\times U_1\cup\cdots\cup \{\omega^p\}\times U_p$,
where 
 \begin{itemize}
\item $U_1,\ldots,U_p$ are pairwise disjoint subsets of $V({\mathcal H})$,
\item $\displaystyle\bigcup_{i=1}^p U_i\neq\varnothing$, and
\item the hypergraph ${\mathcal H}[U_1,U_2,\ldots,U_p]$
is a complete $r$-uniform $p$-partite hypergraph.
 \end{itemize}
Note that some of $U_i$'s might be empty. 
In fact, if $U_1,\ldots,U_p$ are pairwise disjoint subsets of $V({\mathcal H})$ and the number of nonempty $U_i$'s is less than $r$, then ${\mathcal H}[U_1,U_2,\ldots,U_p]$ is a complete $r$-uniform $p$-partite hypergraph and thus
$\{\omega^1\}\times U_1\cup\cdots\cup \{\omega^p\}\times U_p\in {\rm B}_0({\mathcal H},{\mathbb{Z}_p})$. For each $\epsilon\in{\mathbb{Z}_p}$ 
and each $(\epsilon',v)\in V({\rm B}_0({\mathcal H},{\mathbb{Z}_p}))$, define  $\epsilon\cdot(\epsilon',v)=(\epsilon\cdot\epsilon',v)$. One can see that this action makes   
${\rm B}_0({\mathcal H},{\mathbb{Z}_p})$ a free simplicial $\mathbb{Z}_p$-complex. 
It should be mentioned that the $\Z_2$-box-complex ${\rm B}_0({\mathcal H},\Z_2)$ is extensively 
studied in the literature, see~\cite{MR2279672,MR2351519}.  
In the literature, for a graph $G$, the simplicial complex 
${\rm B}_0(G,{\mathbb{Z}_2})$ is shown by $B_0(G)$.
This simplicial complex is used to introduce some lower bounds for the chromatic 
number of a  given graph $G$, see~\cite{MR2279672}. 
In particular, we have the following inequalities
$$\chi(G)\geq {\rm ind}(B_0(G))+1\geq {\rm coind}(B_0(G))+1\geq n-{\rm alt}({\mathcal F})\geq{\rm cd}_2({\mathcal F}),$$
where ${\mathcal F}$ is any hypergraph such that ${\rm KG}^2({\mathcal F})$ and $G$ are 
isomorphic, see~\cite{2014arXiv1403.4404A,2013arXiv1302.5394A,MR2279672}.\\

\noindent{\bf $\mathbb{Z}_p$-Poset.}
A partially ordered set, or simply a {\it poset}, is defined as an ordered pair $P=(V(P),\preceq)$, where  $V(P)$ is a set called the ground set of $P$ and $\preceq$ is a partial order on $V(P)$.
For two posets $P$ and $Q$, 
by an order-preserving map $\phi:P\longrightarrow Q$, we mean  a map $\phi$  
from  $V(P)$ to $V(Q)$  such that for each $u,v\in V(P)$, if $u\preceq v$, then $\phi(u)\preceq \phi(v)$.
 A poset $P$ is called a {\it  $\mathbb{Z}_p$-poset}, if $\mathbb{Z}_p$ acts on $V(P)$ 
 and furthermore,
for each $\epsilon\in \mathbb{Z}_p$, the map $\epsilon:V(P)\longrightarrow V(P)$ which $v\mapsto \epsilon\cdot v$ is an automorphism of $P$ (order preserving bijective map).
If for each $\epsilon\in \mathbb{Z}_p\setminus\{e\}$, this map has no fixed  point, then $P$ is called a {\it  free $\mathbb{Z}_p$-poset}.
For two $\mathbb{Z}_p$-poset $P$ and $Q$, 
by an order-preserving $\mathbb{Z}_p$-map $\phi:P\longrightarrow Q$, we mean 
 an order-preserving map from  $V(P)$ to $V(Q)$ such that for each $v\in V(P)$ and $\epsilon\in \mathbb{Z}_p$, we have $\phi(\epsilon\cdot v)=\epsilon\cdot\phi(v)$.
 If there exists such a map, we write $P\stackrel{\mathbb{Z}_p}{\longrightarrow} Q$.

For a nonnegative integer $n$ and a prime number $p$, let $Q_{n,p}$ be a free
$\mathbb{Z}_p$-poset with ground set $\mathbb{Z}_p\times[n+1]$ such that for any 
two members $(\epsilon,i),(\epsilon',j)\in Q_{n,p}$,  $(\epsilon,i)<_{Q_{n,p}}(\epsilon',j)$ if
$i<j$. Clearly, $Q_{n,p}$ is a   free $\mathbb{Z}_p$-poset with the action $\epsilon\cdot(\epsilon',j)=(\epsilon\cdot\epsilon',j)$ for each $\epsilon\in\mathbb{Z}_p$ and $(\epsilon',j)\in Q_{n,p}$.
For a $\mathbb{Z}_p$-poset $P$, the {\it $\mathbb{Z}_p$-cross-index} of $P$, denoted
${\rm Xind}_{\mathbb{Z}_p}(P)$, is the least integer $n$ such that there is a 
$\mathbb{Z}_p$-map from $P$ to $Q_{n,p}$. Throughout the paper, for $p=2$,  
we speak about ${\rm Xind}(-)$ rather than ${\rm Xind}_{\mathbb{Z}_2}(-)$. 
It should be mentioned that ${\rm Xind}(-)$ is first defined in~\cite{SiTaZs13}.

Let $P$ be a poset. We can define an {\it order complex} $\Delta P$ with the vertex set 
same as the ground set of $P$ and simplex set consisting of all chains in $P$. 
One can see that if $P$ is a free $\mathbb{Z}_p$-poset, then $\Delta P$ is a free 
simplicial $\mathbb{Z}_p$-complex. 
Moreover, any order-preserving $\Z_p$-map $\phi:P\longrightarrow Q$ can be lifted 
to a simplicial 
$\mathbb{Z}_p$-map from $\Delta P$ to $\Delta Q$. Clearly, there is a simplicial 
$\mathbb{Z}_p$-map from $\Delta Q_{n,p}$ to $\mathbb{Z}_p^{*(n+1)}$ (identity map). 
Therefore,
if ${\rm Xind}_{\mathbb{Z}_p}(P)=n$, then
we have a simplicial $\mathbb{Z}_p$-map from $\Delta P$ to $\mathbb{Z}_p^{*(n+1)}$. This 
implies that 
${\rm Xind}_{\mathbb{Z}_p}(P)\geq {\rm ind}_{\mathbb{Z}_p}(\Delta P)$. 
Throughout the paper, for each $(\epsilon, j)\in Q_{n,p}$, when we speak about the sign of  
$(\epsilon, j)$ and the absolute value of  $(\epsilon, j)$, we mean $\epsilon$ and 
$j$, respectively.

\begin{alphtheorem}{\rm \cite{AliHajiMeu2016}}\label{altercrossindex}
Let $P$ be a free ${\mathbb Z}_2$-poset and $\phi:P\longrightarrow Q_{s,2}$ be an order preserving ${\mathbb Z}_2$-map. Then $P$ contains a chain $p_1\prec_P\cdots\prec_Pp_{ k}$ such that $k= {\rm Xind}(P)+1$ and
the signs of $\phi(p_i)$ and $\phi(p_{i+1})$ differ 
for each $i\in[k-1]$. Moreover, if $s= {\rm Xind}(P)$, then for any $(s+1)$-tuple 
$(\epsilon_1,\ldots,\epsilon_{s+1})\in\mathbb{Z}_2^{s+1}$, there is at least one chain 
$p_1\prec_P\cdots\prec_Pp_{ s+1}$ such that $\phi(p_i)=(\epsilon_i,i)$ for each $i\in[s+1]$.
\end{alphtheorem}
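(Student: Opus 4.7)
I would treat the two assertions separately, using in each case the definition of the $\mathbb{Z}_2$-cross-index as the least integer $n$ admitting an order-preserving $\mathbb{Z}_2$-map $P\to Q_{n,2}$.

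For the first assertion, set $f(p)=\operatorname{sign}(\phi(p))$ and introduce the \emph{alternating rank}
$$r(p)=\max\bigl\{k:\exists\,p_1\prec\cdots\prec p_k=p\text{ in }P\text{ with }f(p_j)\ne f(p_{j+1})\text{ for every }j\in[k-1]\bigr\}.$$
With $m=\max_p r(p)$, I would show that $\psi(p)=(f(p),r(p))$ is an order-preserving $\mathbb{Z}_2$-map $P\to Q_{m-1,2}$. Equivariance is immediate: the $\mathbb{Z}_2$-action flips every sign in a chain and hence preserves alternation. For order-preservation, a relation $p\prec p'$ splits into two cases: if $f(p)=f(p')$, replacing the last vertex of an optimal alternating chain at $p$ by $p'$ yields $r(p')\ge r(p)$; if $f(p)\ne f(p')$, appending $p'$ to the same chain yields $r(p')\ge r(p)+1$. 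In either case $\psi(p)\preceq\psi(p')$, and the universal property of the cross-index gives $m-1\ge\operatorname{Xind}(P)$. Truncating any chain realising $r(p)=m$ from the front to its last $\operatorname{Xind}(P)+1$ entries gives the first assertion.

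For the \emph{moreover} part, fix $s=\operatorname{Xind}(P)$ and a target pattern $(\epsilon_1,\ldots,\epsilon_{s+1})\in\mathbb{Z}_2^{s+1}$. My first step is a reduction to the all-$+$ case via the \emph{twist}
$$\phi^\epsilon(p)=\bigl(f(p)\cdot\epsilon_{|\phi(p)|},\,|\phi(p)|\bigr).$$
Since the level is $\mathbb{Z}_2$-invariant, $\phi^\epsilon$ remains equivariant; since same-level comparability in $Q_{s,2}$ forces matching signs before and after the twist, $\phi^\epsilon$ remains order-preserving; and a chain $p_1\prec\cdots\prec p_{s+1}$ satisfies $\phi(p_i)=(\epsilon_i,i)$ for every $i$ if and only if $\phi^\epsilon(p_i)=(+,i)$ for every $i$. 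So I may replace $\phi$ by $\phi^\epsilon$ and only look for an all-$+$ chain.

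Now suppose for contradiction that no chain $p_1\prec\cdots\prec p_{s+1}$ with $\phi(p_i)=(+,i)$ exists, and define the \emph{reachability}
$$\pi(p)=\max\bigl\{k:\exists\,p_1\prec\cdots\prec p_k\preceq p\text{ with }\phi(p_j)=(+,j)\text{ for every }j\bigr\}\cup\{0\}$$
together with its companion $\pi^-(p)$ using the all-$-$ pattern; the equivariance of $\phi$ gives $\pi(-p)=\pi^-(p)$, and the assumed obstruction forces $\pi(p),\pi^-(p)\le s$. I would then build an order-preserving $\mathbb{Z}_2$-equivariant map $\tilde\phi\colon P\to Q_{s-1,2}$ of the form $\tilde\phi(p)=(f(p),L(p))$, where $L$ is a $\mathbb{Z}_2$-invariant function of $\pi$ and $\pi^-$ taking values in $[s]$ and designed to jump strictly along any relation $p\prec p'$ on which $f$ flips. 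Producing $L$ is the hard part: it must simultaneously be symmetric under swapping $\pi\leftrightarrow\pi^-$, bounded by $s$, and strictly monotone along every sign-switching comparability, and the essential input is that the blocked all-$+$ chain of length $s+1$ forces the needed numerical gap between $\pi(p)$ and $\pi^-(p')$ at opposite-sign comparable pairs. Once $\tilde\phi$ is constructed, the definition of the cross-index gives $\operatorname{Xind}(P)\le s-1$, contradicting $s=\operatorname{Xind}(P)$ and completing the proof.
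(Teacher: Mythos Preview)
The paper does not prove this statement at all: Theorem~B is quoted from \cite{AliHajiMeu2016} without proof, so there is no argument in the paper to compare your proposal against. I can therefore only comment on the proposal on its own merits.

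Your treatment of the first assertion is correct and essentially the standard argument: the alternating rank $r(p)$ together with the sign $f(p)$ gives an order-preserving $\mathbb{Z}_2$-map into $Q_{m-1,2}$, whence $m\ge\operatorname{Xind}(P)+1$, and truncating a chain realising $r(p)=m$ finishes it. The twist reduction at the start of the ``moreover'' part is also clean and correct.

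The genuine gap is in the second half of the ``moreover'' part. You state that you would construct an order-preserving $\mathbb{Z}_2$-map $\tilde\phi(p)=(f(p),L(p))$ into $Q_{s-1,2}$, with $L$ a $\mathbb{Z}_2$-invariant function of $\pi$ and $\pi^-$ that strictly increases across every sign-flipping comparability. But you do not construct $L$; you list its desiderata and invoke an unproved ``numerical gap'' between $\pi(p)$ and $\pi^-(p')$ at opposite-sign comparable pairs. This is exactly the heart of the argument, and as stated the constraints look problematic: nothing you have proved rules out $p\prec p'$ with $f(p)\ne f(p')$ yet $(\pi(p),\pi^-(p))=(\pi(p'),\pi^-(p'))$, in which case any symmetric function of $(\pi,\pi^-)$ gives $L(p)=L(p')$ and order-preservation into $Q_{s-1,2}$ fails. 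You need either to prove that this configuration cannot occur under the blocked-chain hypothesis, or to build $L$ from finer data than just $(\pi,\pi^-)$ (for instance, also using the level $|\phi(p)|$). Until $L$ is actually exhibited and verified, the second assertion remains unproved.
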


\noindent{\bf $\mathbb{Z}_p$-Hom-Complex.}
Let ${\mathcal H}$ be an $r$-uniform hypergraph. Also, let $p\geq r$ be a prime number. 
The {\it $\mathbb{Z}_p$-hom-complex} ${\rm Hom}(K^r_p,{\mathcal H})$ is a free 
$\mathbb{Z}_p$-poset with the ground set consisting of all ordered $p$-tuples 
$(U_1,\cdots,U_p)$, where $U_i$'s are nonempty pairwise disjoint subsets of $V$ and 
${\mathcal H}[U_1,\ldots,U_p]$ is a complete $r$-uniform $p$-partite hypergraph. 
For two $p$-tuples $(U_1,\cdots,U_p)$ and $(U'_1,\cdots,U'_p)$ in 
${\rm Hom}(K^r_p,{\mathcal H})$, we define $(U_1,\cdots,U_p)\preceq(U'_1,\cdots,U'_p)$ 
if $U_i\subseteq U'_i$ for each $i\in[p]$. 
Also, for each $\omega^i\in \mathbb{Z}_p=\{\omega^1,\ldots,\omega^p\}$, let 
$\omega^i\cdot (U_1,\cdots,U_p)=(U_{1+i},\cdots,U_{p+i})$, where $U_j=U_{j-p}$ for $j>p$. 
Clearly, this action is a free $\mathbb{Z}_p$-action on ${\rm Hom}(K^r_p,{\mathcal H})$. 
Consequently, ${\rm Hom}(K^r_p,{\mathcal H})$ is a free $\mathbb{Z}_p$-poset with 
this $\mathbb{Z}_p$-action.

For a nonempty graph $G$ and for $p=2$, it is proved~\cite{2014arXiv1403.4404A,2013arXiv1302.5394A,SiTaZs13,MR2279672} that
\begin{equation}\label{equation}
\begin{array}{lll}
\chi(G) &\geq & {\rm Xind}({\rm Hom}(K_2,G))+2 \geq  {\rm ind}(\Delta {\rm Hom}(K_2,G))+2 \geq  {\rm ind}(B_0(G))+1\\
&\geq &  {\rm coind}(B_0(G))+1\geq  |V({\mathcal F})|-{\rm alt}_2({\mathcal F})
\geq  {\rm cd}_2({\mathcal F}),
\end{array}
\end{equation} 
where ${\mathcal F}$ is any hypergraph such that ${\rm KG}^2({\mathcal F})$ and $G$ are isomorphic.\\

\section{\bf Notations and Tools}\label{definition}

For a simplicial complex $K$, by $\sd K$, we mean the first barycentric subdivision of $K$.
It is the simplicial complex whose vertex set is the set of nonempty simplices of $K$
and whose simplices are the collections of simplices of $K$ which are pairwise 
comparable by inclusion.  Throughout the paper, by $\sigma_{t-1}^{r-1}$, we mean the 
$(t-1)$-dimensional simplicial complex with vertex set $\mathbb{Z}_r$  
containing all $t$-subsets 
of $\mathbb{Z}_r$ as its maximal simplices. 
The join of two simplicial complexes $C$ and $K$, denoted $C*K$, is a simplicial complex with the vertex set $V(C)\biguplus V(K)$ and such that the set of its simplices is
$\{F_1\biguplus F_2:\; F_1\in C\mbox{ and } F_2\in K\}$.
Clearly, we can see $\mathbb{Z}_r$ as a $0$-dimensional simplicial complex. 
Note that the vertex set of simplicial complex $\sd\mathbb{Z}_r^{*\alpha}$ can be identified with $(\mathbb{Z}_r\cup\{0\})^\alpha\setminus\{\zero\}$ and the vertex set of $(\sigma^{r-1}_{t-1})^{*n}$ is the set of all pairs $(\epsilon,i)$, where $\epsilon\in \mathbb{Z}_r$ and $i\in [n]$.

\subsection{{\bf $\mathbb{Z}_p$-Tucker-Ky Fan lemma}}
The famous Borsuk-Ulam theorem has many generalizations which have been extensively used in investigating graph coloring properties. Some of these interesting generalizations are Tucker lemma~\cite{MR0020254}, $Z_p$-Tucker Lemma~\cite{MR1893009}, and Tucker-Ky Fan~\cite{MR0051506}.  For more details about the Borsuk-Ulam theorem and its generalizations, we refer the reader to \cite{MR1988723}.

Actually, Tucker lemma is a combinatorial counterpart of Borsuk-Ulam theorem. There are several interesting and surprising applications of Tucker Lemma in combinatorics, including a combinatorial proof of Lov{\'a}sz-Kneser 
theorem by Matou{\v{s}}ek \cite{MR2057690}. 
\begin{alphlemma}\label{tuckeroriginal}
{\rm(Tucker lemma \cite{MR0020254}).} 
Let $m$ and $n$ be positive integers and
$\lambda:\{-1,0,+1\}^n\setminus \{(0,\ldots,0)\} \longrightarrow \{\pm 1, \pm 2,\ldots ,\pm m\}$
be a map satisfying the following properties:
\begin{itemize}
\item for any $X\in \{-1,0,+1\}^n\setminus \{\zero\}$, we have $\lambda(-X)=-\lambda(X)$ {\rm (}a
      $Z_2$-equivariant map{\rm ),}
\item no two signed vectors $X$ and $Y$ are such that
$X\subseteq Y$ and $\lambda(X) =-\lambda(Y)$.
\end{itemize}
Then, we have $m \geq n$.
\end{alphlemma}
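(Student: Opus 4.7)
The plan is to derive a contradiction from the Borsuk--Ulam theorem by using the hypothetical labeling $\lambda$ to manufacture a $\mathbb{Z}_2$-equivariant continuous map $S^{n-1} \longrightarrow S^{m-1}$ in the case $m < n$. The starting observation is that the set $\{-1,0,+1\}^n \setminus \{\zero\}$ is naturally the face poset of the boundary of the $n$-dimensional cross-polytope $\lozenge_n = \mathrm{conv}(\pm e_1,\ldots,\pm e_n) \subseteq \mathbb{R}^n$: a signed vector $X$ with $X^{+1} = \{i : x_i = +1\}$ and $X^{-1} = \{i : x_i = -1\}$ corresponds to the simplex $\mathrm{conv}(\{e_i : x_i = +1\} \cup \{-e_i : x_i = -1\})$. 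Since $\partial \lozenge_n$ is homeomorphic to $S^{n-1}$ and the involution $X \mapsto -X$ realizes the antipodal action, we have a free simplicial $\mathbb{Z}_2$-complex $\mathsf{K}_n$ with $\|\mathsf{K}_n\| \cong S^{n-1}$ on which $\lambda$ lives.

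Next I would pass to the barycentric subdivision $\mathrm{sd}\,\mathsf{K}_n$, whose vertices are exactly the nonempty signed vectors $X$, and define a simplicial map $\tilde\lambda : \mathrm{sd}\,\mathsf{K}_n \longrightarrow \mathsf{K}_m$ by sending each vertex $X$ to the vertex of $\mathsf{K}_m$ indexed by $\lambda(X)$, i.e.\ to $\mathrm{sign}(\lambda(X)) \cdot e_{|\lambda(X)|} \in \mathbb{R}^m$. The first hypothesis on $\lambda$ immediately makes $\tilde\lambda$ a $\mathbb{Z}_2$-map on vertices. The key verification is that $\tilde\lambda$ really is simplicial: a simplex of $\mathrm{sd}\,\mathsf{K}_n$ is a chain $X_1 \subsetneq X_2 \subsetneq \cdots \subsetneq X_k$, and I need its image to span a simplex of $\mathsf{K}_m$, meaning that no two of the image vertices are antipodes of each other. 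This is exactly what the second hypothesis guarantees: $\lambda(X_i) = -\lambda(X_j)$ for $X_i \subsetneq X_j$ is forbidden, so the multiset of image vertices lies in a single orthant of $\mathbb{R}^m$ and spans a face of $\partial \lozenge_m$.

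Once $\tilde\lambda$ is seen to be a simplicial $\mathbb{Z}_2$-map, its geometric realization is a continuous antipodal map $S^{n-1} \cong \|\mathrm{sd}\,\mathsf{K}_n\| \longrightarrow \|\mathsf{K}_m\| \cong S^{m-1}$. By the Borsuk--Ulam theorem, such a map cannot exist when $m < n$, and this contradiction forces $m \geq n$.

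The main obstacle, and the only genuinely delicate step, is the simpliciality check in the previous paragraph: one must see that the antipode-avoidance condition along chains $X_1 \subsetneq \cdots \subsetneq X_k$ is strong enough to keep every image chain inside a single face of $\partial \lozenge_m$, not merely to prevent antipodes among consecutive pairs. This follows because $\subsetneq$ is transitive, so the forbidden pattern $\lambda(X_i) = -\lambda(X_j)$ is ruled out for \emph{every} pair $i < j$, not only adjacent ones; once this is noted, the rest of the argument is just packaging Borsuk--Ulam. An alternative (combinatorial) route I would keep in reserve is an inductive pseudomanifold / Sperner-type argument on $\mathsf{K}_n$, which avoids topology but at the cost of a more intricate bookkeeping of ``bad'' simplices.
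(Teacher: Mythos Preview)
Your argument is the standard and correct derivation of Tucker's lemma from the Borsuk--Ulam theorem: identify $\{-1,0,+1\}^n\setminus\{\zero\}$ with the vertex set of $\sd(\partial\lozenge_n)$, push $\lambda$ forward to a simplicial $\mathbb{Z}_2$-map into $\partial\lozenge_m$, and apply Borsuk--Ulam. The simpliciality check you flag as the delicate step is handled exactly right---the second hypothesis forbids $\lambda(X_i)=-\lambda(X_j)$ for every comparable pair, not just adjacent ones, so the image of a chain really is a face of $\partial\lozenge_m$.

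There is nothing to compare against, however: the paper does not supply a proof of this lemma. It is quoted as Lemma~\ref{tuckeroriginal} with a citation to Tucker's original 1946 paper and used only as background before the paper develops its own $\mathbb{Z}_p$-generalizations (Lemmas~\ref{Z_pfanlemma}, \ref{genfanlemma}, \ref{Gtucker}). Those later lemmas are proved in the paper by a related but distinct mechanism---building an explicit simplicial $\mathbb{Z}_p$-map into a lower $E_n\mathbb{Z}_p$ space and invoking Dold's theorem rather than Borsuk--Ulam directly---so if you want a template closer to the paper's style, the proof of Lemma~\ref{Gtucker} (the $\mathbb{G}$-Fan lemma) is the nearest analogue.
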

Another interesting generalization of the Borsuk-Ulam theorem is
Ky~Fan's lemma~\cite{MR0051506}. This generalization ensures that with the same assumptions as in Lemma~\ref{tuckeroriginal}, there is odd number of chains
$X_1\subseteq X_2\subseteq \cdots \subseteq X_n$ such that $$\{\lambda(X_1),\ldots,\lambda(X_n)\}=\{+c_1,-c_2,\ldots ,
(- 1)^{n-1}c_n\},$$ where  $1\leq c_1 < \cdots < c_n \leq m$.
Ky~Fan's lemma has been used in several articles to
study some coloring properties of graphs, see \cite{AliHajiMeu2016,MR2763055,MR2837625}.
There are also some other generalizations of
Tucker Lemma. 
A $\mathbb{Z}_p$ version of Tucker Lemma, called $\mathbb{Z}_p$-Tucker Lemma, is proved by Ziegler~\cite{MR1893009} and extended by Meunier~\cite{MR2793613}.
In next subsection, we present a $\mathbb{Z}_p$ version of Ky~Fan's lemma
which is called $\mathbb{Z}_p$-Tucker-Ky Fan lemma.

\subsection{{\bf New Generalizations of Tucker Lemma}}
Before presenting our results, we need to introduce 
some functions having key roles in the paper.
Throughout the paper, we are going to use these functions repeatedly. 
Let $m$ be a positive integer.
We remind that $(\sigma^{p-1}_{p-2})^{*m}$ is a free simplicial $\mathbb{Z}_p$-complex with vertex set $\mathbb{Z}_p\times [m]$. \\

\noindent{\bf The value function $l(-)$.}
Let $\tau\in (\sigma^{p-1}_{p-2})^{*m}$ be a simplex.
For each $\epsilon\in \mathbb{Z}_p$, define
$\tau^\epsilon=\left\{(\epsilon,j):\; (\epsilon,j)\in \tau\right\}.$ 
Moreover,  define 
$$l(\tau)=\max\left\{\displaystyle|\bigcup_{\epsilon\in\mathbb{Z}_p} B^\epsilon|:\; \forall\epsilon\in\mathbb{Z}_p ,\; B^\epsilon\subseteq \tau^\epsilon\mbox{ and } \forall \epsilon_1,\epsilon_2\in\mathbb{Z}_p,\; |\;|B^{\epsilon_1}|-|B^{\epsilon_2}|\;|\leq 1 \right\}.$$
Note that if we set $h(\tau)=\ds\min_{\epsilon\in \mathbb{Z}_p}|\tau^\epsilon|$, then
$$l(\tau)=p\cdot h(\tau)+|\{\epsilon\in\mathbb{Z}_p:\; |\tau^\epsilon|>h(\tau)\}|.$$\\

\noindent{\bf The sign functions $s(-)$ and $s_0(-)$.} 
For an $a\in[m]$, 
let $W_a$ be the set of all simplices $\tau\in (\sigma_{p-2}^{p-1})^{*m}$ such that $|\tau^\epsilon|\in\{0,a\}$ for each $\epsilon\in\mathbb{Z}_p$. Let $W=\displaystyle\bigcup_{a=1}^{m}W_a$. 
Choose an arbitrary $\mathbb{Z}_p$-equivariant map $s:W\longrightarrow \mathbb{Z}_p$. 
Also, consider an $\mathbb{Z}_p$-equivariant map $s_0:\sigma_{p-2}^{p-1}\longrightarrow \mathbb{Z}_p$.
Note that since $\mathbb{Z}_p$ acts freely on both $\sigma_{p-2}^{p-1}$ and $W$, 
these maps can be easily built by choosing one representative in each orbit. It should be mentioned that both functions $s(-)$ and $s_0(-)$ are first introduced in~\cite{Meunier14}.

Now, we are in a position to generalize Tucker-Ky Fan lemma to $\mathbb{Z}_p$-Tucker-Ky Fan lemma.
\begin{lemma}{\rm ($\mathbb{Z}_p$-Tucker-Ky Fan lemma).}\label{Z_pfanlemma}
Let $m,n,p$ and $\alpha$ be nonnegative integers, where
$m,n\geq 1$, $m\geq \alpha\geq 1$, and $p$ is prime. Let
$$
\begin{array}{crcl}
\lambda: & (\mathbb{Z}_p\cup\{0\})^n\setminus\{\zero\} &\longrightarrow & \mathbb{Z}_p\times[m]\\
& X &\longmapsto & (\lambda_1(X),\lambda_2(X))
\end{array}$$ be a $\mathbb{Z}_p$-equivariant map
satisfying the following conditions. 
\begin{itemize}
\item For $X_1\subseteq X_2\in \left(\mathbb{Z}_p\cup\{0\}\right)^n\setminus\{\zero\}$,
if $\lambda_2(X_1)=\lambda_2(X_2)\leq \alpha$, then $\lambda_1(X_1)=\lambda_1(X_2)$.
\item For $X_1\subseteq X_2\subseteq\cdots \subseteq X_p\in \left(\mathbb{Z}_p\cup\{0\}\right)^n\setminus\{\zero\}$,
if $\lambda_2(X_1)=\lambda_2(X_2)=\cdots=\lambda_2(X_p)\geq\alpha+1$, then 
$$\left|\left\{\lambda_1(X_1),\lambda_1(X_2),\ldots,\lambda_1(X_p)\right\}\right|<p.$$
\end{itemize}
Then there is a chain $$Z_1\subset Z_2\subset\cdots\subset Z_{n-\alpha}\in \left(\mathbb{Z}_p\cup\{0\}\right)^n\setminus\{\zero\}$$
such that 
\begin{enumerate}
\item for each $i\in [n-\alpha]$, $\lambda_2(Z_i)\geq \alpha+1$,
\item  for each $i\neq j\in [n-\alpha]$, $\lambda(Z_i)\neq \lambda(Z_j)$, and
\item\label{condition3} for  each 
$\epsilon\in\mathbb{Z}_p$, 
$$\left\lfloor{n-\alpha\over p}\right\rfloor\leq \left|\left\{j:\; \lambda_1(Z_j)=\epsilon\right\}\right|\leq \left\lceil{n-\alpha\over p}\right\rceil.$$
\end{enumerate}
In particular, $n-\alpha\leq (p-1)(m-\alpha)$.
\end{lemma}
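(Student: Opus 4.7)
The plan is to argue by contradiction using the fact that $\mathbb{Z}_p^{*n}$ is an $E_{n-1}\mathbb{Z}_p$-space, so its $\mathbb{Z}_p$-index equals $n-1$. The labeling $\lambda$, together with the two auxiliary equivariant sign functions $s$ and $s_0$ introduced just before the lemma, is used to construct a $\mathbb{Z}_p$-equivariant simplicial map whose existence yields both the ``in particular'' inequality and, after singling out a specific $\mathbb{Z}_p$-orbit of facets, the desired chain.

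First, regard $\lambda$ as a $\mathbb{Z}_p$-equivariant vertex map from $\sd(\mathbb{Z}_p^{*n})$---whose vertices are the nonzero vectors $X\in(\mathbb{Z}_p\cup\{0\})^n$ and whose simplices are their chains under coordinatewise inclusion---into $\mathbb{Z}_p\times[m]$. The two hypotheses guarantee that $\lambda$ extends to a $\mathbb{Z}_p$-equivariant simplicial map into the free $\mathbb{Z}_p$-complex
$$
T \;=\; \mathbb{Z}_p^{*\alpha}\;*\;(\sigma_{p-2}^{p-1})^{*(m-\alpha)}.
$$
Indeed, at each level $j\leq\alpha$, the first hypothesis forces $\lambda_1$ to be constant on the restriction of any chain to that level, so the image meets each $\mathbb{Z}_p$ factor in at most one vertex; at each level $j\geq\alpha+1$, the second hypothesis (extended inductively from chains of length $p$ to arbitrary length) prevents the image from using all $p$ signs, so it meets each $\sigma_{p-2}^{p-1}$ factor in a proper subset of $\mathbb{Z}_p$. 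Since $\dim T = \alpha+(p-1)(m-\alpha)-1$ and the $\mathbb{Z}_p$-index is monotone under equivariant maps, $n-1\leq\dim T$ immediately yields the ``in particular'' inequality $n-\alpha\leq (p-1)(m-\alpha)$.

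To produce the chain satisfying $(1)$--$(3)$, I would refine this map using the sign functions $s$ and $s_0$ and then carry out a mod-$p$ degree count. Because $\sd(\mathbb{Z}_p^{*n})$ carries a non-trivial top $\mathbb{F}_p$-homology class, any $\mathbb{Z}_p$-equivariant map into an appropriate subcomplex of $T$ of dimension $n-1$ must hit every $\mathbb{Z}_p$-orbit of top simplices with the same non-zero multiplicity mod $p$; in particular, the orbit whose facets encode the \emph{balanced} sign pattern of condition $(3)$ has non-empty preimage. A preimage chain then restricts to $Z_1\subset\cdots\subset Z_{n-\alpha}$ supported at levels $\geq\alpha+1$ (property $(1)$), on which $\lambda$ is injective (property $(2)$), and whose first coordinates realize the balanced distribution (property $(3)$).

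The main obstacle is the last step: pinpointing the \emph{balanced} $\mathbb{Z}_p$-orbit of facets. Hitting some orbit is a routine equivariant degree argument, but isolating the balanced orbit requires a careful partition of the facets of $T$ into $\mathbb{Z}_p$-orbits indexed by combinatorial sign patterns---this is precisely why the functions $s:W\to\mathbb{Z}_p$ and $s_0:\sigma_{p-2}^{p-1}\to\mathbb{Z}_p$ are introduced, since they $\mathbb{Z}_p$-equivariantly assign signs to the building blocks of such facets. Equivalently, one may run a Ky~Fan-style path-following argument on chains of $\sd(\mathbb{Z}_p^{*n})$ in the spirit of Prescott--Su's $\mathbb{Z}_2$-refinement, using $s$ and $s_0$ to label partial chains by their deviation from the balanced distribution and tracing a path that terminates at a chain realizing balance.
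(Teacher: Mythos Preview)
Your setup is right: $\lambda$ is a simplicial $\mathbb{Z}_p$-map $\sd(\mathbb{Z}_p^{*n})\to T=\mathbb{Z}_p^{*\alpha}*(\sigma_{p-2}^{p-1})^{*(m-\alpha)}$, and comparing $n-1={\rm ind}_{\mathbb{Z}_p}(\sd\mathbb{Z}_p^{*n})$ with $\dim T$ gives the ``in particular'' inequality. The gap is your main step. The assertion that a $\mathbb{Z}_p$-equivariant map from $\sd(\mathbb{Z}_p^{*n})$ into an $(n-1)$-dimensional subcomplex of $T$ must hit \emph{every} $\mathbb{Z}_p$-orbit of top simplices with the same nonzero mod-$p$ multiplicity is not a standard result and is not true in general: the target need not be a pseudomanifold, so there is no global degree to distribute uniformly over orbits. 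Neither the degree count nor the Prescott--Su style path-following you sketch actually isolates the balanced orbit; you flag this yourself as the ``main obstacle'' but do not resolve it, and the role you assign to $s,s_0$ (partitioning facets into orbits) is not how they enter the argument.

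The paper's proof is different and avoids any orbit-by-orbit surjectivity. It proceeds by contradiction: assume every simplex $\sigma\cup\tau$ of $K={\rm Im}(\lambda)$ has $l(\tau)\leq n-\alpha-1$, where $l(\tau)$ is the size of the largest \emph{balanced} subfamily of $\tau$. One then defines an explicit simplicial $\mathbb{Z}_p$-map $\Gamma:\sd K\to\mathbb{Z}_p^{*(n-1)}$ whose second coordinate is $\alpha+l(\tau)$ (or the top level of $\sigma$ when $\tau=\varnothing$) and whose first coordinate is $s(\bar\tau)$ or $s_0(\bar\tau)$, applied to the ``minimal layer'' $\bar\tau$ of $\tau$. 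The technical core is the case analysis showing $\Gamma$ is simplicial: whenever $\tau\subseteq\tau'$ and $l(\tau)=l(\tau')$, the sign coordinate is forced to agree. Composing with the barycentric lift $\bar\lambda:\sd^2\mathbb{Z}_p^{*n}\to\sd K$ yields a $\mathbb{Z}_p$-map $\sd^2\mathbb{Z}_p^{*n}\to\mathbb{Z}_p^{*(n-1)}$, contradicting Dold's theorem. So the missing idea is to use $l(\tau)$ as a \emph{height function} collapsing $K$ into $\mathbb{Z}_p^{*(n-1)}$; once some simplex has $l(\tau)\geq n-\alpha$, its preimage chain restricted to levels above $\alpha$ is the desired balanced chain directly, with no degree argument needed.
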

\begin{proof}
Note that the map $\lambda$ can be considered as a simplicial $\mathbb{Z}_p$-map from $\sd \mathbb{Z}_p^{*n}$ to 
$(\mathbb{Z}_p^{*\alpha})*((\sigma_{p-2}^{p-1})^{*(m-\alpha)}).$
Let $K={\rm Im}(\lambda)$. 
Note that each simplex in $K$ can be represented in a unique form $\sigma\cup\tau$ such that
$\sigma\in \mathbb{Z}_p^{*\alpha}$ and $\tau \in (\sigma_{p-2}^{p-1})^{*m-\alpha}.$

In view of definition of the function $l(-)$ and the properties which $\lambda$ satisfies in, to prove the assertion, it suffices to show that there is a simplex $\sigma\cup\tau\in K$ such that  $l(\tau)\geq n-\alpha$. For a contradiction, suppose that for each $\sigma\cup\tau\in K$, we have $l(\tau)\leq n-\alpha-1$.

Define the map 
$$\Gamma: \sd K\longrightarrow \mathbb{Z}_p^{*(n-1)}$$
such that for each vertex $\sigma\cup\tau\in V(\sd K)$, 
\begin{itemize}
\item if $\tau=\varnothing$, then $\Gamma(\sigma\cup\tau)=(\epsilon, j)$, 
where $j$ is the maximum possible value such that $(\epsilon, j)\in\sigma$.
Note that since $\sigma\in \mathbb{Z}_p^{*\alpha}$,  
there is only one $\epsilon\in\mathbb{Z}_p$ for which 
 the maximum is attained. Therefore, in this case, the function $\Gamma$ is well-defined.

\item if $\tau\neq\varnothing$. Define $h(\tau)=\ds\min_{\epsilon\in \mathbb{Z}_p}|\tau^\epsilon|.$
\begin{enumerate}[label={\rm (\roman*)}]
\item   If $h(\tau)=0$, then define $\bar{\tau}=\{\epsilon\in \mathbb{Z}_p:\; 
                      \tau^\epsilon=   \varnothing\}\in \sigma^{p-1}_{p-2}$ and 
                      $$\Gamma(\sigma\cup\tau)=\left(s_0(\bar\tau), \alpha+l(\tau)\right).$$
              
\item  If $h(\tau)> 0$, then define $\bar{\tau}=\displaystyle
	        \bigcup_{\{\epsilon\in\mathbb{Z}_p:\; |\tau^\epsilon|=h(\tau)\}} \tau^\epsilon\in W$ 
	        and $$\Gamma(\sigma\cup\tau)=\left(s(\bar\tau), \alpha+l(\tau)\right).$$

\end{enumerate}
\end{itemize}
Now, we claim that  $\Gamma$ is a simplicial $\mathbb{Z}_p$-map from $\sd K$ to $\mathbb{Z}_p^{*(n-1)}$. It is clear that  $\Gamma$ is a $\mathbb{Z}_p$-equivariant map.
For a contradiction, suppose that there are $\sigma\cup\tau,\sigma'\cup\tau' \in \sd K$
such that $\sigma\subseteq \sigma'$, $\tau\subseteq\tau'$, $\Gamma(\sigma\cup\tau)=(\epsilon,\beta)$, and $\Gamma(\sigma'\cup\tau')=(\epsilon',\beta)$, where $\epsilon\neq \epsilon'$. 
First note that in view of the definition of $\Gamma$ and the assumption $\Gamma(\sigma\cup\tau)=(\epsilon,\beta)$ and $\Gamma(\sigma'\cup\tau')=(\epsilon',\beta)$, the case  $\tau=\varnothing$ and $\tau'\neq\varnothing$ is not possible.
If $\tau'=\varnothing$, then 
$\tau=\tau'=\varnothing$ and we should have 
$(\epsilon,\beta),(\epsilon',\beta)\in\sigma'\in \mathbb{Z}_p^{*\alpha}$ which implies that $\epsilon=\epsilon'$, a contradiction.
If $\varnothing\neq \tau\subseteq \tau'$, then in view of definition of $\Gamma$, we should have $l(\tau)=l(\tau')$.
We consider three different cases.\\
\begin{enumerate}[label={\rm (\roman*)}]
\item If $h(\tau)=h(\tau')=0$, then $$\epsilon=s_0(\{\epsilon\in \mathbb{Z}_p:\; 
         \tau^\epsilon=   \varnothing\})\neq s_0(\{\epsilon\in \mathbb{Z}_p:\;  
         {\tau'}^\epsilon=   \varnothing\})=\epsilon'.$$ 
          Therefore,   $ \{\epsilon\in \mathbb{Z}_p:\;  
         {\tau'}^\epsilon=   \varnothing\}\subsetneq\{\epsilon\in \mathbb{Z}_p:\; 
         \tau^\epsilon=   \varnothing\}$. This implies that 
         $$l(\tau')=p-|\{\epsilon\in \mathbb{Z}_p:\; 
         {\tau'}^\epsilon=   \varnothing\}|>p-|\{\epsilon\in \mathbb{Z}_p:\; 
         \tau^\epsilon=   \varnothing\}|=l(\tau),$$ 
         a contradiction.\\

\item If $h(\tau)=0$ and $h(\tau')>0.$ We should have 
	$l(\tau)\leq p-1$ and $l(\tau')\geq p$ which contradicts the fact that $l(\tau)=l(\tau')$.\\

\item If $h(\tau)>0$ and $h(\tau')>0.$ 
	 Note that 
	 $$ l(\tau)=p\cdot h(\tau)+|\{\epsilon\in\mathbb{Z}_p:\; |\tau^\epsilon|>h(\tau)\}|
	 \mbox{ and } l(\tau')=p\cdot h(\tau')+|\{\epsilon\in\mathbb{Z}_p:\; 
	 |{\tau'}^\epsilon|>h(\tau')\}|.$$
For this case, two different sub-cases will be distinguished.
	 \begin{itemize}
	 \item[(a)] If $h(\tau)=h(\tau')=h$, then 
	 $$\epsilon=s(\displaystyle\bigcup_{\{\epsilon\in\mathbb{Z}_p:\; |\tau^\epsilon|=h\}} \tau^\epsilon)\neq s(\displaystyle\bigcup_{\{\epsilon\in\mathbb{Z}_p:\; |{\tau'}^\epsilon|=h\}} {\tau'}^\epsilon)=\epsilon'.$$
	 Clearly, it implies that $$\displaystyle\bigcup_{\{\epsilon\in\mathbb{Z}_p:\; |\tau^\epsilon|=h\}} \tau^\epsilon\neq \displaystyle\bigcup_{\{\epsilon\in\mathbb{Z}_p:\; |{\tau'}^\epsilon|=h\}} {\tau'}^\epsilon.$$
	 Note that $\tau\subseteq \tau'$ and $h=\ds\min_{\epsilon\in \mathbb{Z}_p}|\tau^\epsilon|=\ds\min_{\epsilon\in \mathbb{Z}_p}|{\tau'}^\epsilon|.$ Therefore, we should have
	 $$
	 \{\epsilon\in\mathbb{Z}_p:\; |{\tau'}^\epsilon|=h\} \subsetneq \{\epsilon\in\mathbb{Z}_p:\; |{\tau}^\epsilon|=h\}$$
	 and consequently $l(\tau)<l(\tau')$ which is a contradiction.
	 \item[(b)] If $h(\tau)<h(\tau')$, then 
	 $$l(\tau)\leq p\cdot h(\tau)+p-1< p\cdot (h(\tau)+1)\leq l(\tau'),$$
	 a contradiction.	
	 \end{itemize}
\end{enumerate}
Therefore, $\Gamma$ is a simplicial $\mathbb{Z}_p$-map from $\sd K$ to $\mathbb{Z}_p^{*(n-1)}$. Naturally, $\lambda$ can be lifted to a simplicial $\mathbb{Z}_p$-map $\bar\lambda:\sd^2 \mathbb{Z}_p^{*n}\longrightarrow \sd K$. 
Thus $\Gamma\circ\bar\lambda$ is a  simplicial $\mathbb{Z}_p$-map from $\sd^2 \mathbb{Z}_p^{*n}$ to
$\mathbb{Z}_p^{*(n-1)}$.
In view of Dold's theorem~\cite{MR711043,MR1988723},  
the dimension of $\mathbb{Z}_p^{*(n-1)}$ should be strictly larger than the connectivity of $\sd^2 \mathbb{Z}_p^{*n}$, that is 
$n-2>n-2$,
which is not possible.
\end{proof}

Lemma~\ref{Z_pfanlemma} provides a short simple proof of Meunier's colorful result for Kneser hypergraphs (next Theorem) as follows.
\begin{alphtheorem}{\rm \cite{Meunier14}}\label{colorfulhyper}
Let ${\mathcal H}$ be a hypergraph and let $p$ be a prime number. Then any proper coloring $c:V({\rm KG}^p({\mathcal H}))\longrightarrow [C]$ {\rm(}$C$ arbitrary{\rm)} must contain a colorful balanced complete $p$-uniform $p$-partite hypergraph with 
$|V({\mathcal H})|-{\rm alt}_p({\mathcal H})$ vertices.
\end{alphtheorem}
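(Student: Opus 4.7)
\medskip

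\noindent\textbf{Proof proposal.} The plan is to apply the $\mathbb{Z}_p$-Tucker--Ky Fan lemma (Lemma~\ref{Z_pfanlemma}) to a hand-crafted labeling $\lambda$ built from the proper coloring $c:E(\mathcal{H})\to[C]$. Set $n=|V(\mathcal{H})|$ and fix a bijection $\sigma:[n]\to V(\mathcal{H})$ that realizes $\alpha=\mathrm{alt}_p(\mathcal{H})$. I would invoke the lemma with parameter $m=\alpha+C$ and with the following map $\lambda:(\mathbb{Z}_p\cup\{0\})^n\setminus\{\zero\}\to\mathbb{Z}_p\times[\alpha+C]$. If $\mathrm{alt}(X)\le\alpha$, set $\lambda_2(X)=\mathrm{alt}(X)$ and let $\lambda_1(X)$ be the value of $X$ at the smallest index $i^{*}$ with $\mathrm{alt}(x_1,\dots,x_{i^{*}})=\mathrm{alt}(X)$ (this is a nonzero entry, so $\lambda_1(X)\in\mathbb{Z}_p$). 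If $\mathrm{alt}(X)>\alpha$, then by definition of $\mathrm{alt}_p(\mathcal{H},\sigma)$ some $\epsilon\in\mathbb{Z}_p$ has $E(\mathcal{H}[\sigma(X^\epsilon)])\ne\varnothing$; choose a pair $(\epsilon,e)$ with $e\in E(\mathcal{H})$ and $e\subseteq\sigma(X^\epsilon)$ by a fixed $\mathbb{Z}_p$-invariant rule (for instance, pick $e$ minimal in some total order of $E(\mathcal{H})$, noting that $\epsilon$ is then forced because the $\sigma(X^\epsilon)$ are disjoint), and set $\lambda_1(X)=\epsilon$, $\lambda_2(X)=\alpha+c(e)$.

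\medskip

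The next step is to verify the three hypotheses of Lemma~\ref{Z_pfanlemma}. The $\mathbb{Z}_p$-equivariance follows because $(\omega\cdot X)^\epsilon=X^{\omega^{-1}\epsilon}$ in the ``edge'' case and because the greedy/first-position rule commutes with multiplication by $\omega$ in the ``alternation'' case. The first structural condition, that $X_1\subseteq X_2$ with $\lambda_2(X_1)=\lambda_2(X_2)=j\le\alpha$ forces $\lambda_1(X_1)=\lambda_1(X_2)$, is a standard check: when alt is preserved by enlarging the support, the new nonzero entries can only extend existing runs without shifting the first index at which alt reaches $j$. For the second condition, suppose $X_1\subseteq\cdots\subseteq X_p$ with $\lambda_2(X_i)=\alpha+k$ for a common color $k$ and pairwise distinct $\lambda_1(X_i)=\epsilon_i$. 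The associated edges $e_i$ satisfy $e_i\subseteq\sigma(X_i^{\epsilon_i})\subseteq\sigma(X_p^{\epsilon_i})$; since the $\epsilon_i$ exhaust $\mathbb{Z}_p$, the sets $\sigma(X_p^{\epsilon_i})$ are pairwise disjoint, so the edges $e_1,\dots,e_p$ are pairwise disjoint and all of the same color, contradicting that $c$ is a proper coloring of $\mathrm{KG}^p(\mathcal{H})$.

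\medskip

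With the hypotheses verified, the lemma furnishes a chain $Z_1\subsetneq\cdots\subsetneq Z_{n-\alpha}$ in $(\mathbb{Z}_p\cup\{0\})^n\setminus\{\zero\}$ with $\lambda_2(Z_i)\ge\alpha+1$, pairwise distinct $\lambda(Z_i)$, and $|\{i:\lambda_1(Z_i)=\epsilon\}|\in\{\lfloor(n-\alpha)/p\rfloor,\lceil(n-\alpha)/p\rceil\}$ for every $\epsilon\in\mathbb{Z}_p$. Since $\lambda_2(Z_i)>\alpha$, each $Z_i$ yields an edge $e_i\in E(\mathcal{H})$ with $c(e_i)=\lambda_2(Z_i)-\alpha$ and $e_i\subseteq\sigma(Z_i^{\lambda_1(Z_i)})$. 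Define the $p$ parts $V_\epsilon=\{e_i:\lambda_1(Z_i)=\epsilon\}$ for $\epsilon\in\mathbb{Z}_p$. The $V_\epsilon$ are pairwise disjoint: if $e=e_i=e_j$ with $\lambda_1(Z_i)=\epsilon\ne\epsilon'=\lambda_1(Z_j)$, take the larger of $Z_i,Z_j$ in the chain, say $Z_j$; then $e\subseteq\sigma(Z_j^\epsilon)\cap\sigma(Z_j^{\epsilon'})=\varnothing$, impossible. Within each $V_\epsilon$ the colors are pairwise distinct, since $\lambda(Z_i)$ are distinct and $\lambda_1$ is fixed, forcing the $\lambda_2$ values (hence the colors $c(e_i)$) to differ. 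Finally, for any transversal $f_\epsilon\in V_\epsilon$, writing $f_\epsilon=e_{i_\epsilon}$ and letting $Z$ be the maximum of the $Z_{i_\epsilon}$ in the chain, we get $f_\epsilon\subseteq\sigma(Z^\epsilon)$, and so the $f_\epsilon$'s are pairwise disjoint edges of $\mathcal{H}$, i.e.\ an edge of $\mathrm{KG}^p(\mathcal{H})$. Balancedness of the parts comes directly from condition~(3) of the lemma, and $|V_1|+\cdots+|V_p|=n-\alpha$, giving the desired colorful balanced complete $p$-uniform $p$-partite subhypergraph with $n-\alpha$ vertices.

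\medskip

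\noindent\textbf{Main obstacle.} The bookkeeping for condition~1 of Lemma~\ref{Z_pfanlemma} on the ``alternation'' part of $\lambda$ is the delicate point: one has to pick the tie-breaking rule defining $\lambda_1(X)$ when $\mathrm{alt}(X)\le\alpha$ so that both $\mathbb{Z}_p$-equivariance and invariance under enlargement of support (with preserved alt) hold simultaneously. The ``first index achieving the current alt'' rule above is the cleanest, but the verification that extending nonzero entries can only extend monochromatic runs without moving this first index requires a short case analysis on how $\mathrm{alt}(x_1,\dots,x_i)$ evolves as coordinates of $X$ are turned on. Everything else is essentially routine.
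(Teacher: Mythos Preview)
Your proposal is correct and follows essentially the same approach as the paper's proof: apply Lemma~\ref{Z_pfanlemma} with $n=|V(\mathcal{H})|$, $\alpha=\mathrm{alt}_p(\mathcal{H})$, $m=\alpha+C$, defining $\lambda$ via alternation data on the low part and via a chosen edge/color on the high part, then read off the colorful balanced subhypergraph from the guaranteed chain. The only differences are cosmetic tie-breaking choices---the paper takes $\lambda_1(X)$ to be the \emph{first} nonzero coordinate when $\mathrm{alt}(X)\le\alpha$ (slightly simpler to verify than your ``first index achieving the current alt'' rule) and, when $\mathrm{alt}(X)>\alpha$, uses the \emph{maximum} color together with a total order on $2^{[n]}$ to break ties among the $X^\epsilon$, rather than your ``minimum edge'' rule; both pairs of conventions satisfy the hypotheses of Lemma~\ref{Z_pfanlemma} and lead to the same conclusion.
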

\begin{proof}
Consider a bijection $\pi:[n]\longrightarrow V({\mathcal H})$ such that
${\rm alt}_p({\mathcal H},\pi)={\rm alt}_p({\mathcal H}).$
We are going to define a map $$\begin{array}{cccc}
\lambda: & (\mathbb{Z}_p\cup\{0\})^n\setminus\{\zero\} &\longrightarrow & \mathbb{Z}_p\times[m]\\
& X &\longmapsto & (\lambda_1(X),\lambda_2(X))
\end{array}$$ satisfying the conditions of Lemma~\ref{Z_pfanlemma}
and with parameters $n= |V({\mathcal H})|$, $m={\rm alt}_p({\mathcal H})+C$,
and $\alpha={\rm alt}_p({\mathcal H})$.
Assume that $2^{[n]}$ is equipped with a total ordering $\preceq$.
For each $X\in(\mathbb{Z}_p\cup\{0\})^n\setminus\{\zero\}$, define $\lambda(X)$ as follows.
\begin{itemize}
\item If ${\rm alt}(X)\leq {\rm alt}_p({\mathcal H},\pi)$, then let $\lambda_1(X)$ be the first nonzero coordinate of $X$ and $\lambda_2(X)={\rm alt}(X)$.
\item If ${\rm alt}(X)\geq {\rm alt}_p({\mathcal H},\pi)+1$, then in view of the definition of 
${\rm alt}_p({\mathcal H},\pi)$, there is some $\epsilon\in\mathbb{Z}_p$
such that $E(\pi(X^\epsilon))\neq \varnothing$.
Define 
$$c(X)=\max\left\{c(e):\; \exists\epsilon\in\mathbb{Z}_p\mbox { such that } 
e\subseteq \pi(X^\epsilon)\right\}$$
and $\lambda_2(X)={\rm alt}_p({\mathcal H},\pi)+c(X)$. 
Choose $\epsilon\in\mathbb{Z}_p$ such that 
there is at least one edge $e\in\pi (X^\epsilon)$ with $c(X)=c(e)$ and such that
 $X^\epsilon$ is the maximum one  having this property. By the maximum, we mean
 the maximum according to the total ordering $\preceq$. 
It is clear that $\epsilon$ is defined uniquely. Now, let $\lambda_1(X)=\epsilon$.
\end{itemize}
One can check that $\lambda$ satisfies the conditions of Lemma~\ref{Z_pfanlemma}.
Consider the chain $Z_1\subset Z_2\subset\cdots\subset Z_{n-{\rm alt}_p({\mathcal H},\pi)}$ whose existence is ensured by Lemma~\ref{Z_pfanlemma}.
Note that for each $i\in[n-{\rm alt}_p({\mathcal H},\pi)]$, we have $\lambda_2(Z_i)>{\rm alt}_p({\mathcal H},\pi)$. Consequently, $\lambda_2(Z_i)={\rm alt}_p({\mathcal H},\pi)+c(Z_i)$. Let $\lambda(Z_i)=(\epsilon_i,j_i)$. 
Note that for each $i$, there is at least one edge 
$e_{i,\epsilon_i}\subseteq \pi(Z_i^{\epsilon_i})\subseteq \pi(Z_{n-{\rm alt}_p({\mathcal H},\pi)}^{\epsilon_i})$ such that
$c(e_{i,\epsilon_i})=j_i-{\rm alt}_p({\mathcal H},\pi)$.
For each $\epsilon\in\mathbb{Z}_p$, define 
$U_\epsilon=\{e_{i,\epsilon_i}:\; \epsilon_i=\epsilon\}.$ 
We have the following three properties for $U_\epsilon$'s.
\begin{itemize}
\item Since the chain $Z_1\subset Z_2\subset\cdots\subset 
	 Z_{n-{\rm alt}_p({\mathcal H},\pi)}$ is satisfying Condition~\ref{condition3} of 
	 Lemma~\ref{Z_pfanlemma}, we have 
	 $\left\lfloor{n-{\rm alt}_p({\mathcal H},\pi)\over p}\right\rfloor\leq 
	 |U_\epsilon|\leq \left\lceil{n-{\rm alt}_p({\mathcal H},\pi)\over p}\right\rceil.$
\item The edges in $U_\epsilon$ get distinct colors. 
	 If there are two edges $e_{i,\epsilon}$ and $e_{i',\epsilon}$ in $U_\epsilon$ such that
	 $c(e_{i,\epsilon})=c(e_{i',\epsilon})$, then $\lambda(Z_i)=\lambda(Z_{i'})$ 
	 which is not possible.
\item If $\epsilon\neq \epsilon'$, then for each $e\in U_\epsilon$ and $f\in U_{\epsilon'}$, 
	 we have $e\cap f=\varnothing$. It is clear because 
	 $e\subseteq\pi(Z_{n-{\rm alt}_p({\mathcal H},\pi)}^\epsilon)$,
	 $f\subseteq\pi(Z_{n-{\rm alt}_p({\mathcal H},\pi)}^{\epsilon'})$,
	 and 
	 $$\pi(Z_{n-{\rm alt}_p({\mathcal H},\pi)}^\epsilon)\cap 
	 \pi(Z_{n-{\rm alt}_p({\mathcal H},\pi)}^{\epsilon'})=\varnothing.$$
\end{itemize}
Now, it is clear that the subhypergraph ${\rm KG}^p({\mathcal H})[U_{\omega^1},\ldots,U_{\omega^p}]$ is the desired subhypergraph.
\end{proof}


The proof of next lemma is similar to the proof of Lemma~\ref{Z_pfanlemma}.
\begin{lemma}\label{genfanlemma}
Let $C$ be a free  simplicial $\mathbb{Z}_p$-complex 
such that ${\rm ind}_{\mathbb{Z}_p}(C)\geq t$
and let $\lambda:C\longrightarrow (\sigma^{p-1}_{p-2})^{*m}$ be a simplicial $\mathbb{Z}_p$-map.
Then there is at least one $t$-dimensional simplex $\sigma\in C$ such that $\tau=\lambda(\sigma)$ is a $t$-dimensional simplex and for each $\epsilon\in \mathbb{Z}_p$, we have
$\lfloor{t+1\over p}\rfloor\leq |\tau^\epsilon|\leq\lceil{t+1\over p}\rceil.$
\end{lemma}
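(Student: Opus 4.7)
The plan is to mimic the proof of Lemma~\ref{Z_pfanlemma} almost verbatim, with $\sd^2\mathbb{Z}_p^{*n}$ replaced by $\sd C$, the $\mathbb{Z}_p^{*\alpha}$-factor removed from the target, and the Dold-style contradiction replaced by a direct appeal to the index hypothesis. Suppose for contradiction that no $t$-dimensional simplex $\sigma\in C$ satisfies the stated balance condition. The first step is to upgrade this to the uniform statement
$$l(\tau)\le t\quad\text{for every simplex } \tau\in K:={\rm Im}(\lambda).$$
Indeed, if some $\tau'\in K$ had $l(\tau')\ge t+1$, then a balanced sub-collection of exactly $t+1$ vertices of $\tau'$ would determine a balanced $t$-dimensional face $\tau^*\subseteq\tau'$; writing $\tau'=\lambda(\sigma')$ and picking one preimage in $\sigma'$ of each vertex of $\tau^*$ produces a $t$-dimensional face $\sigma^*\subseteq\sigma'$ with $\lambda(\sigma^*)=\tau^*$, contradicting the hypothesis.

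Next, I would define a $\mathbb{Z}_p$-equivariant simplicial map $\Gamma:\sd K\longrightarrow\mathbb{Z}_p^{*t}$ on vertices (non-empty simplices $\tau\in K$) by
$$\Gamma(\tau)=\begin{cases}(s_0(\bar\tau),\,l(\tau)) & \text{if } h(\tau)=0,\\ (s(\bar\tau),\,l(\tau)) & \text{if } h(\tau)>0,\end{cases}$$
where $h(\tau)=\min_{\epsilon\in\mathbb{Z}_p}|\tau^\epsilon|$, with $\bar\tau=\{\epsilon\in\mathbb{Z}_p:\tau^\epsilon=\varnothing\}$ in the first case and $\bar\tau=\bigcup_{\{\epsilon:|\tau^\epsilon|=h(\tau)\}}\tau^\epsilon$ in the second. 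Since $l(\tau)\le t$ by the previous step, $\Gamma(\tau)$ lies in $\mathbb{Z}_p\times[t]$, which is the vertex set of $\mathbb{Z}_p^{*t}$; the $\mathbb{Z}_p$-equivariance of $\Gamma$ is inherited from that of $s_0$ and $s$. The verification that $\Gamma$ is simplicial amounts to showing that if $\tau\subseteq\tau'$ with $\Gamma(\tau)=(\epsilon,\beta)$ and $\Gamma(\tau')=(\epsilon',\beta)$, then $\epsilon=\epsilon'$; this is precisely the three-case analysis from the proof of Lemma~\ref{Z_pfanlemma} (according to whether $h(\tau),h(\tau')$ are zero and, when both positive, whether they are equal), and it uses only the monotonicity of $l$, the behavior of $h$ and $\bar{(\cdot)}$ under inclusion, and the equivariance of $s_0, s$---none of which involve the ambient $\mathbb{Z}_p^{*\alpha}$-factor that appeared there.

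To close the argument, I would compose $\Gamma$ with the simplicial $\mathbb{Z}_p$-map $\sd\lambda:\sd C\longrightarrow\sd K$ induced by $\lambda$, obtaining a simplicial $\mathbb{Z}_p$-map $\sd C\longrightarrow\mathbb{Z}_p^{*t}$. Since $\mathbb{Z}_p^{*t}$ is an $E_{t-1}\mathbb{Z}_p$ space and therefore has $\mathbb{Z}_p$-index equal to $t-1$, this would give ${\rm ind}_{\mathbb{Z}_p}(C)={\rm ind}_{\mathbb{Z}_p}(\sd C)\le t-1$, contradicting the hypothesis ${\rm ind}_{\mathbb{Z}_p}(C)\ge t$. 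The main obstacle is not conceptual---the construction is exactly that of Lemma~\ref{Z_pfanlemma}---but rather the careful bookkeeping in both the reduction to ``$l(\tau)\le t$ for all $\tau\in K$'' and the case analysis establishing the simplicial property of $\Gamma$; everything else is a direct translation.
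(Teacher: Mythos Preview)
Your proposal is correct and follows the same strategy as the paper: assume no balanced $t$-dimensional image exists, deduce $l(\tau)\le t$ for all $\tau\in K={\rm Im}(\lambda)$, define $\Gamma:\sd K\to\mathbb{Z}_p^{*t}$ via $s_0,s$ and $l$, and compose with $\sd\lambda$ to contradict ${\rm ind}_{\mathbb{Z}_p}(C)\ge t$. In fact you spell out the reduction to $l(\tau)\le t$ more carefully than the paper, which simply asserts it.
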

\begin{proof}
For simplicity of notation, let $K={\rm Im}(\lambda)$.
Clearly, to prove the assertion, it is enough to show that there is a $t$-dimensional simplex $\tau\in K$ 
such that $l(\tau)\geq t$.
Suppose, contrary to the assertion, that there is no such a $t$-dimensional simplex.
Therefore, for each simplex $\tau$ of $K$, we have $l(\tau)\leq t$.
For each vertex $\tau\in V(\sd K)$, set $h(\tau)=\ds\min_{\epsilon\in \mathbb{Z}_p}|\tau^\epsilon|$.

Let $\Gamma:\sd K\longrightarrow \mathbb{Z}_p^{*t}$  be a map such that for each vertex $\tau$ of $\sd K$, $\Gamma(\tau)$ is defined as follows.
\begin{enumerate}[label={\rm (\roman*)}]
\item   If $h(\tau)=0$, then define $\bar{\tau}=\{\epsilon\in \mathbb{Z}_p:\; 
                      \tau^\epsilon=   \varnothing\}\in \sigma^{p-1}_{p-2}$ and 
                      $$\Gamma(\sigma\cup\tau)=\left(s_0(\bar\tau), l(\tau)\right).$$
              
\item  If $h(\tau)> 0$, then define $\bar{\tau}=\displaystyle
	        \bigcup_{\{\epsilon\in\mathbb{Z}_p:\; |\tau^\epsilon|=h(\tau)\}} \tau^\epsilon\in W$ 
	        and $$\Gamma(\sigma\cup\tau)=\left(s(\bar\tau), l(\tau)\right).$$
\end{enumerate}
Similar to the proof of Lemma~\ref{Z_pfanlemma}, 
$\Gamma\circ\bar{\lambda}:\sd C\longrightarrow \mathbb{Z}_p^{*t}$ is a  simplicial $\mathbb{Z}_p$-map. This implies that ${\rm ind}_{\mathbb{Z}_p}(C)\leq t-1$ which is~not possible.
\end{proof}

Next proposition is an extension of Theorem~\ref{altercrossindex}. However, we lose some properties by this extension.

\begin{proposition}\label{Xindposet}
Let $P$ be a free ${\mathbb Z}_p$-poset and 
$$\begin{array}{rll}
\psi: P & \longrightarrow & Q_{s,p}\\
       p  &\longmapsto & (\psi_1(p),\psi_2(p))
\end{array}$$ 
be an order preserving ${\mathbb Z}_p$-map. Then $P$ contains a chain $p_1\prec_P\cdots\prec_Pp_{ k}$ such that 
\begin{itemize}
\item $k= {\rm ind}_{\Z_p}(\Delta P)+1$,
\item for each $i\in[k-1]$, $\psi_2(p_i)< \psi_2(p_{i+1})$, and
\item for each $\epsilon\in\mathbb{Z}_p$, 
$$\left\lfloor{k\over p}\right\rfloor\leq \left|\left\{j:\; \psi_1(p_j)=\epsilon\right\}\right|\leq \left\lceil{k\over p}\right\rceil.$$
\end{itemize}
\end{proposition}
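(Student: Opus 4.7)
The plan is to reduce the statement to Lemma~\ref{genfanlemma} by lifting the poset map $\psi$ to the associated order complexes and composing with a canonical embedding into a join of $(p-2)$-skeleta. First I would apply the functor $\Delta$ to $\psi$, producing a simplicial $\mathbb{Z}_p$-map $\Delta\psi: \Delta P \to \Delta Q_{s,p}$. The crucial observation is the equivariant identification $\Delta Q_{s,p} \cong \mathbb{Z}_p^{*(s+1)}$: since two distinct elements $(\epsilon, i), (\epsilon', j) \in Q_{s,p}$ are comparable iff $i \neq j$, a chain in $Q_{s,p}$ is precisely a choice of at most one element of $\mathbb{Z}_p$ from each of the $s+1$ layers, which is exactly a simplex of $\mathbb{Z}_p^{*(s+1)}$. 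The action of $\mathbb{Z}_p$ on both sides is the obvious one, so this identification is $\mathbb{Z}_p$-equivariant.

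Using the inclusion $\mathbb{Z}_p \hookrightarrow \sigma^{p-1}_{p-2}$ as the $0$-skeleton, I would then compose to obtain a simplicial $\mathbb{Z}_p$-map $\Lambda: \Delta P \to (\sigma^{p-1}_{p-2})^{*(s+1)}$. Setting $t = {\rm ind}_{\mathbb{Z}_p}(\Delta P)$, which is automatically finite because $\Lambda$ already lands in an $E_s \mathbb{Z}_p$-space, I would apply Lemma~\ref{genfanlemma} with $C = \Delta P$ and $m = s+1$. This yields a $t$-dimensional simplex $\sigma \in \Delta P$ whose image $\tau = \Lambda(\sigma)$ is also $t$-dimensional and satisfies $\lfloor (t+1)/p \rfloor \leq |\tau^\epsilon| \leq \lceil (t+1)/p \rceil$ for each $\epsilon \in \mathbb{Z}_p$.

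To finish, I would unpack the combinatorial meaning: $\sigma$ corresponds to a chain $p_1 \prec_P \cdots \prec_P p_{t+1}$ of length $k = t+1 = {\rm ind}_{\mathbb{Z}_p}(\Delta P) + 1$, and its image $\tau = \{\psi(p_1), \ldots, \psi(p_{t+1})\}$ lives in the sub-join $\mathbb{Z}_p^{*(s+1)}$. The fact that $\tau$ is $t$-dimensional forces the $t+1$ images to be pairwise distinct; since they form a chain in $Q_{s,p}$, distinctness is equivalent to the strict inequalities $\psi_2(p_i) < \psi_2(p_{i+1})$, which is the second bullet. The distribution bound on $|\tau^\epsilon|$ translates verbatim into the count $|\{j : \psi_1(p_j) = \epsilon\}|$ required by the third bullet. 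I do not anticipate any real obstacle beyond keeping the equivariant identifications straight, because the heavy lifting---applying Dold's theorem to a suitable value/sign-indexed simplicial map---is already encapsulated in Lemma~\ref{genfanlemma}.
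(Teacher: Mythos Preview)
Your proposal is correct and follows exactly the paper's approach: the paper's proof consists of the single observation that $\psi$ induces a simplicial $\mathbb{Z}_p$-map $\Delta P \to \mathbb{Z}_p^{*(s+1)} \subseteq (\sigma_{p-2}^{p-1})^{*(s+1)}$, after which Lemma~\ref{genfanlemma} is invoked. You have simply spelled out in detail the identifications and the unpacking that the paper leaves implicit.
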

\begin{proof}
Note $\psi$ can be considered as a simplicial ${\mathbb Z}_p$-map from $\Delta P$ to $ \mathbb{Z}_p^{*n}\subseteq (\sigma_{p-2}^{p-1})^{*n}$. Now, in view of Lemma~\ref{genfanlemma},
we have the assertion.
\end{proof}
Note that, for $p=2$,
since ${\rm Xind}(P)\geq {\rm ind}(\Delta P)$,
 Theorem~\ref{altercrossindex} is better than proposition~\ref{Xindposet}.
However, we cannot prove that proposition~\ref{Xindposet} 
is valid if we replace ${\rm ind}(\Delta P)$ by ${\rm Xind}(P)$.

In an unpublished paper, Meunier~\cite{unpublishedMeunier} introduced a 
generalization of Tuckey-Ky~Fan lemma. He presented a version of 
$\mathbb{Z}_q$-Fan lemma which is valid for each odd integer 
$q\geq 3$. To be more specific, he proved that if $q$ is an odd positive integer and 
$\lambda:V(T)\longrightarrow \Z_q\times[m]$ is a $\Z_q$-equivariant  labeling 
of an $\Z_q$-equivariant 
triangulation of a $(d-1)$-connected free $\Z_q$-spaces $T$, then 
there is at least one simplex in 
$T$ whose vertices are labelled with labels 
$(\epsilon_0,j_0),(\epsilon_1,j_1),\ldots,(\epsilon_n,j_n)$, 
where $\epsilon_i\neq \epsilon_{i+1}$ and $j_i<j_{i+1}$ for 
all $i\in\{0,1,\ldots,n-1\}$. Also, he asked the question if the result is true for
even value of $q$. This question received a positive answer owing 
to the work of B.~Hanke et~al.~\cite{Hanke2009404}.  
In both mentioned works, the proofs of $\mathbb{Z}_q$-Fan lemma 
are built in involved construction. Here, we take the opportunity of this paper to 
propose the following generalization of this result with a short simple 
proof because we are using similar techniques in the paper.
\begin{lemma}{\rm($\mathbb{G}$-Fan lemma).}\label{Gtucker}
Let $\mathbb{G}$ be a nontrivial finite group and 
let $T$ be a free $\mathbb{G}$-simplicial complex such that ${\rm ind}_{\mathbb{G}}(T)= n$.
Assume that $\lambda:V(T)\longrightarrow \mathbb{G}\times[m]$ be a $\mathbb{G}$-equivariant  labeling such that there is no edge in $T$ whose vertices are labelled  with $(g,j)$ and $(g',j)$ with $g\neq g'$ and $j\in[m]$. Then there is at least one simplicial complex in $T$ whose vertices are labelled with labels $(g_0,j_0),(g_1,j_1),\ldots,(g_n,j_n)$, where $g_i\neq g_{i+1}$ and $j_i<j_{i+1}$ for all $i\in\{0,1,\ldots,n-1\}$. In particular, $m\geq n+1$.
\end{lemma}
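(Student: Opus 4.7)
The plan is to follow the strategy of Lemmas~\ref{Z_pfanlemma} and~\ref{genfanlemma}: reinterpret $\lambda$ as a simplicial $\mathbb{G}$-map into a standard free $\mathbb{G}$-complex, and, assuming the conclusion fails, build an equivariant map witnessing ${\rm ind}_{\mathbb{G}}(T)\le n-1$, contradicting the hypothesis ${\rm ind}_{\mathbb{G}}(T)=n$.

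First I would regard $\mathbb{G}^{*m}$ as the simplicial complex $K$ with vertex set $\mathbb{G}\times[m]$ whose faces are the subsets containing at most one vertex per level $j\in[m]$; the assumption that no edge of $T$ carries two labels $(g,j),(g',j)$ with $g\neq g'$ says exactly that $\lambda$ is a simplicial $\mathbb{G}$-map $T\longrightarrow K$. The target will be $\mathbb{G}^{*n}$, which is an $E_{n-1}\mathbb{G}$-space and therefore satisfies ${\rm ind}_{\mathbb{G}}(\mathbb{G}^{*n})=n-1$.

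For a non-empty simplex $\tau\in K$, list its vertices in increasing order of second coordinate and let $g_1,\ldots,g_s\in\mathbb{G}$ be the resulting sequence of first coordinates; define
$$\ell(\tau):=1+|\{i\in[s-1]\colon g_i\neq g_{i+1}\}|\qquad\text{and}\qquad g^*(\tau):=g_s.$$
The conclusion of the lemma is equivalent to the existence of a simplex $\sigma\in T$ with $\ell(\lambda(\sigma))\ge n+1$. Suppose instead that $\ell(\lambda(\sigma))\le n$ for every $\sigma\in T$, and let $L\subseteq K$ denote the image subcomplex. I would then define
$$\Gamma:\sd L\longrightarrow\mathbb{G}^{*n},\qquad\tau\longmapsto\bigl(g^*(\tau),\,\ell(\tau)\bigr).$$
Equivariance is immediate because the $\mathbb{G}$-action on $K$ acts only on first coordinates. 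To verify simpliciality it suffices to show that on any inclusion $\tau\subsetneq\tau'$ of faces of $L$, the equality $\ell(\tau)=\ell(\tau')$ forces $g^*(\tau)=g^*(\tau')$. This reduces by induction to the insertion of a single new vertex $(g,j)$, followed by a case analysis on the position of $j$ relative to the $j$-coordinates of $\tau$: the transition count changes by $0$, $1$, or $2$, and it changes by $0$ only when the inserted $g$ matches one of its immediate neighbors in the sorted sequence, in particular matching the previous last value when inserted to the right. Hence $\ell$ remains fixed only when the new rightmost first coordinate equals the old one, establishing $g^*(\tau)=g^*(\tau')$.

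Composing $\Gamma$ with the barycentric subdivision $\sd\lambda:\sd T\longrightarrow\sd L$ yields a simplicial $\mathbb{G}$-map $\sd T\longrightarrow\mathbb{G}^{*n}$, contradicting ${\rm ind}_{\mathbb{G}}(T)=n$ via property~(i) of the $\mathbb{G}$-index. The conclusion $m\ge n+1$ then follows from the strict chain $j_0<\cdots<j_n$ in $[m]$. The main obstacle is the insertion lemma underlying simpliciality of $\Gamma$; unlike Lemmas~\ref{Z_pfanlemma} and~\ref{genfanlemma}, no equivariant selections $s(-)$, $s_0(-)$ are needed here, because the canonical ``last-vertex'' invariant $g^*$ is automatically preserved by inclusions that preserve $\ell$.
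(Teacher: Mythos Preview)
Your proof is correct and follows essentially the same approach as the paper. The only cosmetic difference is that the paper defines $\Gamma$ directly on $\sd T$ (rather than on $\sd L$ followed by composition with $\sd\lambda$) and uses the \emph{first} nonzero coordinate of the vector $\lambda(\sigma)\in(\mathbb{G}\cup\{0\})^m\setminus\{\zero\}$ in place of your last coordinate $g^*(\tau)$; by the obvious left--right symmetry of the alternation count, both choices work identically.
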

\begin{proof}
Clearly, the map $\lambda$ can be considered as a $\mathbb{G}$-simplicial map from $T$ to $\mathbb{G}^{*m}$. Naturally, 
each nonempty simplex $\sigma\in \mathbb{G}^{*m}$ can be identified with a vector $X=(x_1,x_2,\ldots,x_m)\in (\mathbb{G}\cup\{0\})^n\setminus\{\zero\}$. To prove the assertion, it is enough to show that there is a simplex $\sigma\in T$ such that ${\rm alt}(\lambda(\sigma))\geq n+1$. 
For a contradiction, suppose that, for each simplex $\sigma\in T$, we have ${\rm alt}(\lambda(\sigma))\leq n$. 
Define 
$$\begin{array}{lrll}
\Gamma:&V(\sd T) &\longrightarrow & \mathbb{G}\times[n]\\
		&\sigma&\longmapsto & \left(g,{\rm alt}(\lambda(\sigma)\right)),
\end{array}$$
where $g$ is the first nonzero coordinate of the vector $\lambda(\sigma)\in (\mathbb{G}\cup\{0\})^n\setminus\{\zero\}.$
One can check that $\Gamma$ is a  simplicial $\mathbb{G}$-map from $\sd T$ to $\mathbb{G}^{*n}$.
Note $\mathbb{G}^{*n}$ is an $E_{n-1} \mathbb{G}$ space. 
Consequently, ${\rm ind}_{\mathbb{G}}(\mathbb{G}^{*n})= n-1$.
This implies that ${\rm ind}_{\mathbb{G}}(T)\leq n-1$ which is a contradiction.
\end{proof}

\subsection{\bf Hierarchy of Indices}
The aim of this subsection is introducing some tools for the proof of Theorem~\ref{inequalities}.

Let $n,\alpha$, and $p$ be integers where $n\geq 1$, $n\geq\alpha\geq 0$, and $p$ is prime.
Define 
$$\displaystyle\Sigma_p(n,\alpha)=\Delta\left\{X\in(\mathbb{Z}_p\cup\{0\})^n:\; {\rm alt}(X)\geq \alpha+1\right\}.$$
Note that $\displaystyle\Sigma_p(n,\alpha)$ is a free simplicial $\mathbb{Z}_p$-complex
with the vertex set $$\left\{X\in(\mathbb{Z}_p\cup\{0\})^n:\; {\rm alt}(X)\geq \alpha+1\right\}.$$

\begin{lemma}\label{indsigma}
Let $n,\alpha$, and $p$ be integers where $n\geq 1$, $n\geq\alpha\geq 0$, and $p$ is prime. Then 
$${\rm ind}_{\mathbb{Z}_p}(\displaystyle\Sigma_p(n,\alpha))\geq n-\alpha-1.$$
\end{lemma}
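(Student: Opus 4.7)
The plan is to argue by contradiction using the $\mathbb{Z}_p$-Tucker-Ky~Fan lemma (Lemma~\ref{Z_pfanlemma}). The edge cases $\alpha = 0$ and $\alpha = n$ are handled trivially (for $\alpha = 0$ one has $\Sigma_p(n,0) = \sd\mathbb{Z}_p^{*n}$, whose index is $n-1$; for $\alpha = n$ the complex is empty and the bound is vacuous), so assume $1 \leq \alpha < n$. Suppose toward contradiction that ${\rm ind}_{\mathbb{Z}_p}(\Sigma_p(n,\alpha)) \leq n - \alpha - 2$. Then by the definition of the $\mathbb{Z}_p$-index there is a $\mathbb{Z}_p$-equivariant simplicial map
$$\tilde{\psi}:\Sigma_p(n,\alpha)\longrightarrow \mathbb{Z}_p^{*(n-\alpha-1)}$$
(after an equivariant simplicial approximation on a sufficient iterated barycentric subdivision, which we suppress). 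For a vertex $X$ of $\Sigma_p(n,\alpha)$ write $\tilde\psi(X) = (\tilde\psi_1(X),\tilde\psi_2(X)) \in \mathbb{Z}_p \times [n-\alpha-1]$. The plan is to feed $\tilde\psi$ into Tucker-Ky~Fan and watch the resulting long chain violate the dimension bound of the target.

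Concretely, I would define a $\mathbb{Z}_p$-equivariant labeling
$$\lambda:(\mathbb{Z}_p\cup\{0\})^n\setminus\{\zero\}\longrightarrow\mathbb{Z}_p\times[n-1]$$
by $\lambda(X) = (g, \alt(X))$ whenever $\alt(X)\leq\alpha$, where $g$ is the first nonzero coordinate of $X$, and $\lambda(X) = (\tilde\psi_1(X), \alpha+\tilde\psi_2(X))$ whenever $\alt(X)\geq \alpha+1$. Taking $m = n-1$, I then verify the two hypotheses of Lemma~\ref{Z_pfanlemma}. For the low regime $\lambda_2(X_1)=\lambda_2(X_2)\leq\alpha$: if the first nonzero coordinates of $X_1\subseteq X_2$ were to differ, then the first nonzero of $X_2$ must occupy a strictly earlier position than that of $X_1$, and prepending its value to any alternating subsequence witnessing $\alt(X_1)$ produces an alternating subsequence of length $\alt(X_1)+1$ in $X_2$, contradicting $\alt(X_1) = \alt(X_2)$. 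For the high regime $\lambda_2(X_1) = \cdots = \lambda_2(X_p)\geq\alpha+1$: the chain $X_1\subseteq\cdots\subseteq X_p$ is a simplex of $\Sigma_p(n,\alpha)$ whose image under $\tilde\psi$ is a simplex of $\mathbb{Z}_p^{*(n-\alpha-1)}$, where each simplex has at most one vertex with a given second coordinate; hence equal $\tilde\psi_2$'s force all the $\tilde\psi(X_i)$ to coincide as vertices, giving $|\{\lambda_1(X_1),\ldots,\lambda_1(X_p)\}|=1<p$.

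Lemma~\ref{Z_pfanlemma} then delivers a chain $Z_1\subset Z_2\subset\cdots\subset Z_{n-\alpha}$ in $(\mathbb{Z}_p\cup\{0\})^n\setminus\{\zero\}$ with $\lambda_2(Z_i)\geq\alpha+1$ for all $i$ and the labels $\lambda(Z_i)$ pairwise distinct. By the choice of $\lambda$ this means every $Z_i$ is a vertex of $\Sigma_p(n,\alpha)$ and the $Z_i$'s form a simplex there, so $\tilde\psi$ sends them to $n-\alpha$ pairwise distinct vertices of $\mathbb{Z}_p^{*(n-\alpha-1)}$ lying in a common simplex. But $\mathbb{Z}_p^{*(n-\alpha-1)}$ has dimension $n-\alpha-2$, so every simplex there has at most $n-\alpha-1$ vertices, the required contradiction. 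Hence ${\rm ind}_{\mathbb{Z}_p}(\Sigma_p(n,\alpha))\geq n-\alpha-1$.

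The main obstacle I expect is the verification of hypothesis~(i) in the low regime, namely the combinatorial lemma that enlarging $X$ without changing $\alt(X)$ cannot change its first nonzero value; once that is in hand the rest is a clean dimension count driven by the simplicial nature of $\tilde\psi$. A secondary (but standard) point is the equivariant simplicial approximation invoked at the outset, which lets us treat $\tilde\psi$ as a genuine simplicial map and apply the Tucker-Ky~Fan mechanism directly to the vertices of $\Sigma_p(n,\alpha)$.
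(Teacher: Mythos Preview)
Your low-regime labeling $X\mapsto(\text{first nonzero of }X,\ \alt(X))$ is exactly the map the paper uses, so the core idea is the same. The paper, however, packages it more directly: it observes that this labeling together with the \emph{identity} on the high regime is already a simplicial $\mathbb{Z}_p$-map
\[
\sd\mathbb{Z}_p^{*n}\longrightarrow \mathbb{Z}_p^{*\alpha}*\Sigma_p(n,\alpha),
\]
and then applies the join inequality $\mathrm{ind}_{\mathbb{Z}_p}(A*B)\le \mathrm{ind}_{\mathbb{Z}_p}(A)+\mathrm{ind}_{\mathbb{Z}_p}(B)+1$ to get $n-1\le(\alpha-1)+\mathrm{ind}_{\mathbb{Z}_p}(\Sigma_p(n,\alpha))+1$. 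No $\tilde\psi$, no Lemma~\ref{Z_pfanlemma}, no simplicial approximation is needed.

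Your route through Lemma~\ref{Z_pfanlemma} has a genuine gap precisely at the point you flag as ``secondary (but standard)''. The hypothesis $\mathrm{ind}_{\mathbb{Z}_p}(\Sigma_p(n,\alpha))\le n-\alpha-2$ gives only a continuous $\mathbb{Z}_p$-map, and equivariant simplicial approximation in general forces you to replace $\Sigma_p(n,\alpha)$ by some iterated barycentric subdivision $\sd^{k}\Sigma_p(n,\alpha)$. Once $k\ge 1$, a chain $X_1\subset\cdots\subset X_p$ of original vertices is a simplex of $\Sigma_p(n,\alpha)$ but \emph{not} a simplex of $\sd^{k}\Sigma_p(n,\alpha)$, so there is no reason $\tilde\psi(X_1),\ldots,\tilde\psi(X_p)$ lie in a common simplex of $\mathbb{Z}_p^{*(n-\alpha-1)}$. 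This breaks both your verification of the second hypothesis of Lemma~\ref{Z_pfanlemma} and your final dimension count. The domain of Lemma~\ref{Z_pfanlemma} is rigidly $(\mathbb{Z}_p\cup\{0\})^n\setminus\{\zero\}$, so the subdivision cannot simply be absorbed there either. The cleanest repair is to drop $\tilde\psi$ and Lemma~\ref{Z_pfanlemma} altogether and argue via the join inequality as above---which is exactly the paper's proof.
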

\begin{proof}
Define
$$
\begin{array}{crcl}
\lambda:  & \sd \mathbb{Z}_p^{*n} & \longrightarrow & 
			(\mathbb{Z}_p^{*\alpha})*\left(\displaystyle\Sigma_p(n,\alpha)\right)\\
		& X					  & \longmapsto	     & 
		\left
			\{\begin{array}{cl}
			(\epsilon,{\rm alt}(X)) &  \mbox{ if ${\rm alt}(X)\leq \alpha$}\\
			                            X    & \mbox{ if ${\rm alt}(X)\geq \alpha+1$},
			\end{array}
		\right.
\end{array}$$
where $\epsilon$ is the first nonzero term of $X$.
Clearly, the map $\lambda$ is a simplicial $\mathbb{Z}_p$-map.
Therefore, 
$$
\begin{array}{lll}
n-1={\rm ind}_{\mathbb{Z}_p}( \sd \mathbb{Z}_p^{*n}) & \leq & {\rm ind}_{\mathbb{Z}_p}\left(\mathbb{Z}_p^{*\alpha}*\displaystyle\Sigma_p(n,\alpha)\right)\\
& \leq & {\rm ind}_{\mathbb{Z}_p}(\mathbb{Z}_p^{*\alpha})+{\rm ind}_{\mathbb{Z}_p}(\displaystyle\Sigma_p(n,\alpha))+1\\
&\leq &\alpha+{\rm ind}_{\mathbb{Z}_p}(\displaystyle\Sigma_p(n,\alpha))
\end{array}
$$ which completes the proof.
\end{proof}
\begin{proposition}\label{inequalityI}
Let ${\mathcal H}$ be a hypergraph. For any integer $r\geq 2$ and any prime number $p\geq r$, we have
$${\rm ind}_{\mathbb{Z}_p}({\rm B}_0({\rm KG}^r({\mathcal H}),\mathbb{Z}_p))+1\geq |V({\mathcal H})|-{\rm alt}_p({\mathcal H}).$$
\end{proposition}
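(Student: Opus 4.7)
\medskip

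\noindent\textbf{Proof proposal.} The plan is to produce a $\mathbb{Z}_p$-map from $\Sigma_p(n,\alpha)$ to (the geometric realization of) $B_0({\rm KG}^r({\mathcal H}),\mathbb{Z}_p)$, where $n=|V({\mathcal H})|$ and $\alpha={\rm alt}_p({\mathcal H})$. Combined with Lemma~\ref{indsigma}, such a map will immediately give
$$\mathrm{ind}_{\mathbb{Z}_p}(B_0({\rm KG}^r({\mathcal H}),\mathbb{Z}_p))\ \geq\ \mathrm{ind}_{\mathbb{Z}_p}(\Sigma_p(n,\alpha))\ \geq\ n-\alpha-1,$$
which, rearranged, is exactly the claim.

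Fix a bijection $\sigma:[n]\longrightarrow V({\mathcal H})$ attaining $\mathrm{alt}_p({\mathcal H})=\mathrm{alt}_p({\mathcal H},\sigma)=\alpha$. For each vertex $X$ of $\Sigma_p(n,\alpha)$, i.e.\ each $X\in(\mathbb{Z}_p\cup\{0\})^n$ with $\mathrm{alt}(X)\geq\alpha+1$, set $U_{X,\epsilon}=\{e\in E({\mathcal H}):\,e\subseteq\sigma(X^\epsilon)\}$ for each $\epsilon\in\mathbb{Z}_p$, and define
$$f(X)\ =\ \bigcup_{\epsilon\in\mathbb{Z}_p}\{\epsilon\}\times U_{X,\epsilon}\ \subseteq\ \mathbb{Z}_p\times E({\mathcal H})\ =\ \mathbb{Z}_p\times V({\rm KG}^r({\mathcal H})).$$
The first task is to check that $f(X)$ is a nonempty simplex of $B_0({\rm KG}^r({\mathcal H}),\mathbb{Z}_p)$. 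The sets $\sigma(X^\epsilon)$ are pairwise disjoint by construction, so the $U_{X,\epsilon}$ are pairwise disjoint subsets of $E({\mathcal H})$; for any $I\in\binom{[p]}{r}$ and any choice $e_i\in U_{X,i}$ for $i\in I$, the edges $e_i$ sit in pairwise disjoint blocks $\sigma(X^i)$, are therefore pairwise disjoint in $V({\mathcal H})$, and thus form an edge of ${\rm KG}^r({\mathcal H})$; hence ${\rm KG}^r({\mathcal H})[U_{X,1},\ldots,U_{X,p}]$ is a complete $r$-uniform $p$-partite hypergraph. Non-emptiness of $\bigcup_\epsilon U_{X,\epsilon}$ is the one place the hypothesis $\mathrm{alt}(X)\geq\alpha+1$ is used: by the definition of $\mathrm{alt}_p({\mathcal H},\sigma)$, there must exist $\epsilon$ with $E({\mathcal H}[\sigma(X^\epsilon)])\neq\varnothing$, and any such edge lies in $U_{X,\epsilon}$.

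Next I would verify that, on the poset $P=\{X:\mathrm{alt}(X)\geq\alpha+1\}$ ordered by $X\subseteq Y\ \Longleftrightarrow\ X^\epsilon\subseteq Y^\epsilon\ \forall\epsilon$, the assignment $X\mapsto f(X)$ is an order-preserving $\mathbb{Z}_p$-map into the face poset of $B_0({\rm KG}^r({\mathcal H}),\mathbb{Z}_p)$. Order-preservation is immediate from $X\subseteq Y\Rightarrow U_{X,\epsilon}\subseteq U_{Y,\epsilon}\Rightarrow f(X)\subseteq f(Y)$. For equivariance, observe that the $\mathbb{Z}_p$-action satisfies $(\omega X)^\epsilon=X^{\omega^{-1}\epsilon}$, hence $U_{\omega X,\epsilon}=U_{X,\omega^{-1}\epsilon}$, and a reindexing gives $f(\omega X)=\omega\cdot f(X)$.

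Taking order complexes turns this order-preserving $\mathbb{Z}_p$-map into a simplicial $\mathbb{Z}_p$-map $\Sigma_p(n,\alpha)=\Delta P\longrightarrow \operatorname{sd} B_0({\rm KG}^r({\mathcal H}),\mathbb{Z}_p)$. Since $\|\operatorname{sd} K\|=\|K\|$ as $\mathbb{Z}_p$-spaces, we obtain a continuous $\mathbb{Z}_p$-map of geometric realizations, which combined with Lemma~\ref{indsigma} produces the desired inequality. The main obstacle I expect is the bookkeeping for well-definedness: verifying that the image simplex really satisfies the complete $p$-partite condition in the boundary cases (in particular when fewer than $r$ of the sets $U_{X,\epsilon}$ are nonempty, so that the induced hypergraph is empty and the ``complete $r$-uniform $p$-partite'' clause is vacuous), and cleanly converting the order-preserving poset map to a $\mathbb{Z}_p$-map at the level of $\|\cdot\|$.
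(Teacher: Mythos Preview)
Your proposal is correct and follows essentially the same route as the paper: both construct the identical map $X\mapsto\bigcup_\epsilon\{\epsilon\}\times\{e\in E({\mathcal H}):e\subseteq\sigma(X^\epsilon)\}$ from $\Sigma_p(n,\alpha)$ to $\sd B_0({\rm KG}^r({\mathcal H}),\mathbb{Z}_p)$ and then invoke Lemma~\ref{indsigma}. Your write-up is in fact more careful---you spell out the non-emptiness, the complete $p$-partite verification, and the equivariance that the paper summarizes in one sentence---and your choice $\alpha={\rm alt}_p({\mathcal H})$ is the intended one (the paper's line ``$\alpha=n-{\rm alt}_p({\mathcal H})$'' is a typo, as its own final display confirms).
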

\begin{proof}
For convenience, let $|V({\mathcal H})|=n$ and $\alpha=n-{\rm alt}_p({\mathcal H})$.
Let $\pi:[n]\longrightarrow V({\mathcal H})$ be the bijection such that 
${\rm alt}_p({\mathcal H},\pi)={\rm alt}_p({\mathcal H})$. 
Define 
$$
\begin{array}{lrll}
\lambda:& \Sigma_p(n,\alpha)& \longrightarrow & \sd{\rm B}_0({\rm KG}^r({\mathcal H}),\mathbb{Z}_p))\\
 & X&\longmapsto & \{\omega^1\}\times U_1\cup\cdots\cup \{\omega^p\}\times U_p,
\end{array}
$$  where $U_i=\{e\in E({\mathcal H}):\; e\subseteq \pi(X^{\omega^i})\}.$
One can see that $\lambda$ is a  simplicial $\mathbb{Z}_p$-map.
Consequently, 
$${\rm ind}_{\mathbb{Z}_p}({\rm B}_0({\rm KG}^r({\mathcal H}),\mathbb{Z}_p))\geq 
{\rm ind}_{\mathbb{Z}_p}(\Sigma_p(n,\alpha))\geq n-{\rm alt}_p({\mathcal H})-1.
$$
\end{proof}

\begin{proposition}\label{inequalityII}
Let ${\mathcal H}$ be an $r$-uniform hypergraph and $p\geq r$ be a prime number.
Then
$$ {\rm Xind}_{\mathbb{Z}_p}({\rm Hom}(K^r_p,{\mathcal H}))+p\geq  {\rm ind}_{\mathbb{Z}_p}(\Delta{\rm Hom}(K^r_p,{\mathcal H}))+p\geq  {\rm ind}_{\mathbb{Z}_p}(B_0({\mathcal H},\mathbb{Z}_p))+1.$$
\end{proposition}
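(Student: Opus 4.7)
The first inequality, ${\rm Xind}_{\Z_p}({\rm Hom}(K^r_p,{\mathcal H})) \geq {\rm ind}_{\Z_p}(\Delta {\rm Hom}(K^r_p,{\mathcal H}))$, has already been established in Section~\ref{Boxdefin}: any order-preserving $\Z_p$-map $P \to Q_{n,p}$ lifts to a simplicial $\Z_p$-map $\Delta P \to \Delta Q_{n,p}$, which one composes with the natural identity simplicial $\Z_p$-map $\Delta Q_{n,p} \to \Z_p^{*(n+1)} = E_n\Z_p$. Hence I focus on the second, nontrivial inequality.

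My plan is to construct a simplicial $\Z_p$-map
$$f: \sd B_0({\mathcal H},\Z_p) \longrightarrow \Z_p^{*(p-1)} * \Delta {\rm Hom}(K^r_p,{\mathcal H}).$$
Since $\sd B_0$ has the same $\Z_p$-index as $B_0$, and since ${\rm ind}_{\Z_p}(\Z_p^{*(p-1)}) = p-2$, the join property ${\rm ind}_{\Z_p}(X*Y)\le {\rm ind}_{\Z_p}(X)+{\rm ind}_{\Z_p}(Y)+1$ then yields
$${\rm ind}_{\Z_p}(B_0({\mathcal H},\Z_p)) \leq (p-2) + {\rm ind}_{\Z_p}(\Delta {\rm Hom}(K^r_p,{\mathcal H})) + 1,$$
which is equivalent to the desired inequality.

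To define $f$, I associate to each simplex $\sigma = \bigcup_{i\in\Z_p}\{\omega^i\}\times U_i$ of $B_0({\mathcal H},\Z_p)$ (i.e., each vertex of $\sd B_0$) its support pattern $I(\sigma) = \{i\in\Z_p : U_i\neq\varnothing\}$. If $I(\sigma) = \Z_p$, then $(U_1,\ldots,U_p)$ is a vertex of $\Delta{\rm Hom}(K^r_p,{\mathcal H})$, and I set $f(\sigma) = (U_p,U_{p-1},\ldots,U_1)$, i.e., the reversed tuple (this reindexing is needed to reconcile the opposite cyclic-shift directions defining the $\Z_p$-actions on $B_0$ and on ${\rm Hom}$). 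Otherwise $\bar{I}(\sigma) = \Z_p \setminus I(\sigma)$ is a nonempty proper subset of $\Z_p$, so I put
$$f(\sigma) = \bigl(s_0(\bar I(\sigma)),\, |I(\sigma)|\bigr) \in \Z_p \times [p-1],$$
a vertex of $\Z_p^{*(p-1)}$, using the $\Z_p$-equivariant map $s_0$ from Section~\ref{definition}.

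The $\Z_p$-equivariance of $f$ reduces to the $\Z_p$-equivariance of $s_0$ (partial-support case) and to the compatibility of the right cyclic shift on $B_0$ with the reversal (full-support case). The main step is simpliciality: given a chain $\sigma_1\subsetneq\cdots\subsetneq\sigma_m$ in $\sd B_0$, its image must be a simplex of the join. Because $\sigma\subseteq \sigma'$ implies $I(\sigma)\subseteq I(\sigma')$, the simplices with $I(\sigma_i) = \Z_p$ form a terminal segment and their (reversed) tuples form a chain in ${\rm Hom}(K^r_p,{\mathcal H})$, hence a simplex of $\Delta {\rm Hom}$. For the initial, partial-support segment, the images must collectively form a simplex of $\Z_p^{*(p-1)}$, i.e., hit each level $k\in[p-1]$ at most once; this follows from the key observation that if $\sigma\subsetneq\sigma'$ with $|I(\sigma)| = |I(\sigma')|$, then $I(\sigma) = I(\sigma')$, so $\bar I(\sigma) = \bar I(\sigma')$ and $f(\sigma) = f(\sigma')$ coincide. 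I expect this observation, together with careful bookkeeping of the two opposite $\Z_p$-action conventions on $B_0$ and on ${\rm Hom}$, to be the main technical hurdle.
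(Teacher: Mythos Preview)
Your approach is essentially the paper's: construct a simplicial $\Z_p$-map from $\sd B_0({\mathcal H},\Z_p)$ into the join of $\Delta{\rm Hom}(K^r_p,{\mathcal H})$ with a free $\Z_p$-complex of index $p-2$, splitting according to whether all $U_i$ are nonempty, and then apply the join inequality for ${\rm ind}_{\Z_p}$. The only differences are cosmetic: the paper uses $\sd\sigma_{p-2}^{p-1}$ as the low-index factor (sending a partial-support $\tau$ directly to the set $\{\omega^i:U_i=\varnothing\}$) rather than your $\Z_p^{*(p-1)}$ via $s_0$, and it writes the full-support case simply as $\lambda(\tau)=\tau$ --- your explicit reversal to reconcile the opposite cyclic-shift conventions on $B_0$ and ${\rm Hom}$ is actually more careful than what the paper records.
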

\begin{proof}
Since already we know 
${\rm Xind}_{\mathbb{Z}_p}({\rm Hom}(K^r_p,{\mathcal H}))\geq  {\rm ind}_{\mathbb{Z}_p}(\Delta{\rm Hom}(K^r_p,{\mathcal H}))$, 
to prove the assertion, it is enough to show that 
${\rm ind}_{\mathbb{Z}_p}(\Delta{\rm Hom}(K^r_p,{\mathcal H}))+p\geq  {\rm ind}_{\mathbb{Z}_p}(B_0({\mathcal H},\mathbb{Z}_p))+1.$
To this end, define 
$$\begin{array}{llll}
\lambda: & \sd B_0({\mathcal H},\mathbb{Z}_p) & \longrightarrow & \left(\sd\sigma_{p-2}^{p-1}\right)*\displaystyle\left(\Delta{\rm Hom}(K^r_p,{\mathcal H})\right)\\
\end{array}$$
such that for each vertex 
$\tau=\displaystyle\bigcup_{i-1}^p\left(\{\omega^i\}\times U_i\right)$ of $\sd B_0({\mathcal H},\mathbb{Z}_p)$, $\lambda(\tau)$ is defined as follows.
\begin{itemize}
\item If $U_i\neq\varnothing$ for each $i\in[p]$, then $\lambda(\tau)=\tau.$
\item If $U_i=\varnothing$ for some $i\in[p]$, then 
$$\lambda(\tau)=\{\omega^i\in\mathbb{Z}_p:\; U_i=\varnothing\}.$$
\end{itemize} 
One can check that the map $\lambda$ is a simplicial $\mathbb{Z}_p$-map. Also,
since $\sigma_{p-2}^{p-1}$ is a free simplicial $\mathbb{Z}_p$-complex of 
dimension $p-2$, we have ${\rm ind}_{\mathbb{Z}_p}(\sigma_{p-2}^{p-1})\leq p-2$
(see properties of the $\mathbb{G}$-index in Section~\ref{intro}).
This implies that 
$$
\begin{array}{lll}
 {\rm ind}_{\mathbb{Z}_p}(B_0({\mathcal H},\mathbb{Z}_p))& \leq &  {\rm ind}_{\mathbb{Z}_p}\left(\left(\sd\sigma_{p-2}^{p-1}\right)*
\left(\Delta{\rm Hom}(K^r_p,{\mathcal H})\right)\right)\\
& \leq &{\rm ind}_{\mathbb{Z}_p}(\sigma_{p-2}^{p-1})+ {\rm ind}_{\mathbb{Z}_p}(\Delta{\rm Hom}(K^r_p,{\mathcal H}))+1\\
&\leq & p-1+{\rm ind}_{\mathbb{Z}_p}(\Delta{\rm Hom}(K^r_p,{\mathcal H}))
\end{array}
$$ which completes the proof.
\end{proof}
\section{\bf Proofs of Theorem~\ref{maincolorfulindex} and Theorem~\ref{inequalities}}\label{sec:proofs}
Now, we are ready to prove Theorem~\ref{maincolorfulindex} and Theorem~\ref{inequalities}.\\

\noindent{\bf Proof of Theorem~\ref{maincolorfulindex}: Part (i).}
For convenience, let ${\rm ind}_{\mathbb{Z}_p}({\rm B}_0({\mathcal H},{\mathbb{Z}_p}))=t$.
Note that
$$
\begin{array}{crcl}
\Gamma:  &\mathbb{Z}_p\times V({\mathcal H}) & \longrightarrow & \mathbb{Z}_p\times [C]\\
		  & (\epsilon,v)		  			   & \longmapsto	     & (\epsilon,c(v))
\end{array}$$ is a simplicial $\mathbb{Z}_p$-map from ${\rm B}_0({\mathcal H},{\mathbb{Z}_p})$ to $(\sigma^{p-1}_{r-2})^{*C}$. Therefore, in view of Lemma~\ref{genfanlemma}, there is a $t$-dimensional simplex $\tau\in{\rm im}(\Gamma)$ such that, for each $\epsilon\in \mathbb{Z}_p$, we have
$\lfloor{t+1\over p}\rfloor\leq |\tau^\epsilon|\leq\lceil{t+1\over p}\rceil.$
Let $\ds\bigcup_{i=1}^p(\{\omega^i\} \times U_i)$ be the minimal simplex in $\Gamma^{-1}(\tau)$. 
One can see that ${\mathcal H}[U_1,\ldots,U_p]$ is the desired subhypergraph.
Moreover,
since every color can be appeared in at most $r-1$ number of $U_i$'s,  we have
$$C\geq {{\rm ind}_{p}({\rm B}_0({\mathcal H},{\mathbb{Z}_p}))+1\over r-1}.$$

\noindent{\bf Part (ii).}
For convenience, let ${\rm Xind}_{\mathbb{Z}_p}({\rm Hom}(K^r_p,{\mathcal H}))=t$.
Define the map $$\lambda:{\rm Hom}(K^r_p,{\mathcal H})\longrightarrow \sd(\sigma_{r-2}^{p-1})^{*C}$$
such that for each $(U_1,\cdots,U_p)\in {\rm Hom}(K^r_p,{\mathcal H})$, 
$$\lambda(U_1,\cdots,U_p)=\{\omega^1\}\times c(U_1) \cup\cdots\cup \{\omega^p\}\times c(U_p).$$
{\bf Claim.} There is a $p$-tuple $(U_1,\cdots,U_p)\in {\rm Hom}(K^r_p,{\mathcal H})$
such that for $\tau=\lambda(U_1,\cdots,U_p)$, we have $l(\tau)\geq {\rm Xind}_{\mathbb{Z}_p}({\rm Hom}(K^r_p,{\mathcal H}))+p$.

\noindent{\bf Proof of Claim.} Suppose, contrary to the claim, that for each $\tau\in {\rm Im}(\lambda)$, we have $l(\tau)\leq t+p-1$.
Note that $\sd(\sigma_{r-2}^{p-1})^{*C}$ can be considered as a free $\mathbb{Z}_p$-poset ordered by inclusion.
One can readily check that $\lambda$ is an  order-preserving $\mathbb{Z}_p$-map.
Clearly, for each $\tau\in {\rm Im}(\lambda)$, we have $h(\tau)=\ds\min_{\epsilon\in\mathbb{Z}_p}|\tau^\epsilon|\geq 1$ and consequently, $l(\tau)\geq p$. Now, define  $$\bar{\tau}=\displaystyle
	        \bigcup_{\{\epsilon\in\mathbb{Z}_p:\; |\tau^\epsilon|=h(\tau)\}} \tau^\epsilon\in W\quad {\rm and }\quad \Gamma(\tau)=\left(s(\bar\tau), l(\tau)-p+1\right).$$
One can see that the map $\Gamma:{\rm im}(\lambda)\longrightarrow Q_{t-1,p}$ is an order-preserving $\mathbb{Z}_p$-map. Therefore, 
$$\Gamma\circ\lambda:{\rm Hom}(K^r_p,{\mathcal H})\longrightarrow Q_{t-1,p}$$ is an 
order-preserving $\mathbb{Z}_p$-map, which contradicts the fact that ${\rm Xind}_{\mathbb{Z}_p}({\rm Hom}(K^r_p,{\mathcal H}))=t$.\hfill$\square$

Now, let $(U_1,\cdots,U_p)$ be a minimal $p$-tuple in ${\rm Hom}(K^r_p,{\mathcal H})$ 
such that for $$\tau=\lambda(U_1,\cdots,U_p)=\{\omega^1\}\times c(U_1) \cup\cdots\cup \{\omega^p\}\times c(U_p),$$ we have $l(\tau)= t+p$.
One can check that ${\mathcal H}[U_1,\cdots,U_p]$ is the desired complete 
$r$-uniform $p$-partite subhypergraph. Similar to the proof of Part (i),
since every color can be appeared in at most $r-1$ number of $U_i$'s,  we have 
$$C\geq {{\rm Xind}_{\mathbb{Z}_p}({\rm Hom}(K^r_p,{\mathcal H}))+p\over r-1}.$$
\hfill$\square$

\noindent{\bf Proof of Theorem~\ref{inequalities}.}
It is simple to prove that
$|V({\mathcal F})|-{\rm alt}_p({\mathcal F})
\geq  {\rm cd}_p({\mathcal F})$ for any hypergraph ${\mathcal F}$.
Therefore, the proof follows by  Proposition~\ref{inequalityI} and Proposition~\ref{inequalityII}.
\hfill$\square$\\

\noindent{\bf Acknowledgements.}
I would like to acknowledge Professor Fr\'ed\'eric~Meunier for interesting discussions about the paper and his invaluable comments. 
Also, I would like to thank Professor Hossein~Hajiabolhasan and Mrs~Roya~Abyazi~Sani for their useful comments.  


\def\cprime{$'$} \def\cprime{$'$}

\end{document}